%
\documentclass[12pt]{amsart}

\oddsidemargin=17pt \evensidemargin=17pt
\headheight=9pt     \topmargin=26pt
\textheight=576pt   \textwidth=433.8pt

\usepackage{amssymb,amsmath,amsthm,enumerate,graphicx,stmaryrd,tikz}
\usetikzlibrary{arrows}

\newcommand{\ox}{\mathcal{O}_X}

\newcommand{\NR}[1]{N^1(#1)_{\mathbb{R}}}
\newcommand{\NQ}[1]{N^1(#1)_{\mathbb{Q}}}

\def\<{\langle}
\def\>{\rangle}

\def\NN{\mathbb{N}}
\def\OO{\mathcal{O}}
\def\PP{\mathbb{P}}
\def\QQ{\mathbb{Q}}
\def\RR{\mathbb{R}}

\DeclareMathOperator{\codim}{codim}

\DeclareMathOperator{\im}{Im}

\DeclareMathOperator{\ord}{ord}

\DeclareMathOperator{\vol}{vol}

\theoremstyle{plain}
\newtheorem{theorem}{Theorem}[section]
\newtheorem{corollary}[theorem]{Corollary}

\newtheorem{lemma}[theorem]{Lemma}

\newtheorem*{Theorem A}{Theorem A}
\newtheorem*{Theorem B}{Theorem B}
\newtheorem*{Theorem C}{Theorem C}

\theoremstyle{definition}
\newtheorem{notation}[theorem]{Notation}
\newtheorem{definition}[theorem]{Definition}
\newtheorem{remark}[theorem]{Remark}
\newtheorem{example}[theorem]{Example}

\begin{document}

\mbox{}
\vspace{-1.1ex}
\title{Okounkov bodies and restricted volumes along very general curves}
\author{Shin-Yao Jow}
\date{}

\begin{abstract}
 Given a big divisor $D$ on a normal complex projective variety $X$, we show that the restricted
 volume of $D$ along a very general complete-intersection curve $C\subset X$ can be read
 off from the Okounkov body of $D$ with respect to an admissible flag containing $C$.
 From this we deduce that if two big divisors $D_1$ and $D_2$ on $X$ have the same
 Okounkov body with respect to every admissible flag, then $D_1$ and $D_2$ are
 numerically equivalent.
\end{abstract}

\keywords{}

\maketitle

\section*{Introduction}

Motivated by earlier works of Okounkov \cite{Oko96,Oko03},
Lazarsfeld and Musta\c{t}\v{a} gave an interesting construction in a recent paper
\cite{LM}, which associates a convex body $\Delta(D)\subset \RR^d$ to any big divisor $D$
on a projective variety $X$ of dimension $d$. (Independent of \cite{LM}, Kaveh and Khovanskii also came up with a similar construction around the same time: see \cite{KK08,KK09}.) 
This so called ``Okounkov body'' encodes
many asymptotic invariants of the complete linear series $|mD|$ as $m$ goes to infinity.
For example, the \emph{volume} of $D$, which is the limit \[
 \vol_X(D)=\lim_{m\to\infty}\frac{h^0(X,mD)}{m^d/d!}, \]
is equal to $d!$ times the Euclidean volume $\vol_{\RR^d}\bigl(\Delta(D)\bigr)$ of
$\Delta(D)$. This viewpoint renders transparent several basic properties about volumes of
big divisors.

Let us recall now the construction of Okounkov bodies from \cite{LM}. The
construction depends upon the choice of an \emph{admissible flag} on $X$, which is by definition a flag \[
 Y_\bullet \colon X=Y_0\supset Y_1 \supset Y_2 \supset \cdots \supset Y_{d-1} \supset Y_d
 =\{\text{pt}\}  \]
of irreducible subvarieties of $X$, where $\codim_X(Y_i)=i$
and each $Y_i$ is nonsingular at the point $Y_d$. The purpose of this flag is that it
will determine a valuation $\nu_{Y_\bullet}$ which maps any nonzero section $s\in H^0(X,mD)$
to a $d$-tuple of nonnegative integers  \[
     \nu_{Y_\bullet}(s)=(\nu_1(s),\ldots,\nu_d(s))\in \NN^d \]
defined as follows. Assuming that all the $Y_i$'s are smooth after replacing $X$ by an
open subset, we can set to begin with \[
  \nu_1(s)= \ord_{Y_1}(s). \]
After choosing a local equation for $Y_1$ in $X$, $s$ determines a section  \[
  \tilde{s}_1 \in H^0(X,mD-\nu_1(s)Y_1)  \]
that does not vanish identically along $Y_1$, so after restriction we get a nonzero
section \[
  s_1 \in H^0(Y_1,mD-\nu_1(s)Y_1).  \]
Then we set  \[
  \nu_2(s)= \ord_{Y_2}(s_1), \]
and continue in this manner to define the remaining $\nu_i(s)$. Once we have the
valuation $\nu_{Y_\bullet}$, we can define  \[
 \Gamma(D)_m:= \im \bigl( (H^0(X,mD)-\{0\})\xrightarrow{\ \nu_{Y_\bullet}\ }\NN^d\bigr). \]
Then the Okounkov body of $D$ (with respect to the flag $Y_\bullet$) is the compact
convex set  \[
 \Delta(D)=\Delta_{Y_\bullet}(D):= \text{closed convex hull}\ \bigl( \bigcup_{m\ge 1}{\textstyle\frac{1}{m}}\cdot
 \Gamma(D)_m\bigr) \subset \RR^d. \]

Lazarsfeld and Musta\c{t}\v{a} have shown in \cite[Proposition~4.1]{LM} that Okounkov bodies are
numerical in nature, i.e. if $D_1$ and $D_2$ are two numerically equivalent big divisors, then
$\Delta_{Y_\bullet}(D_1)=\Delta_{Y_\bullet}(D_2)$ for every admissible flag $Y_\bullet$.
It is, however, not clear whether one can read off \emph{all} numerical invariants
of a given big divisor from its Okounkov bodies with respect to various flags.
In this paper, we give an affirmative answer to this question when $X$ is normal:

\begin{Theorem A}
 Let $X$ be a normal complex projective variety of dimension $d$. If $D_1$
 and $D_2$ are two big divisors on $X$ such that  \[
   \Delta_{Y_\bullet}(D_1)=\Delta_{Y_\bullet}(D_2) \]
 for every admissible flag $Y_\bullet$ on $X$, then $D_1$ and $D_2$ are numerically
 equivalent.
\end{Theorem A}

We will derive Theorem~A from the following Theorem~B, which says that the restricted
volume of $D$ to a very general complete-intersection curve can be read off from its
Okounkov body:

\begin{Theorem B}
 Let $X$ be a normal complex projective variety of dimension $d$. Let $D$ be a big
 divisor on $X$, and let $A_1,\ldots,A_{d-1}$
 be effective very ample divisors on $X$. If the $A_i$'s are very general, and
 $Y_\bullet$ is an admissible flag such that  \[
       Y_r=A_1\cap\cdots\cap A_r,\quad \forall\, r\in \{1,\ldots,d-1\}, \]
 then the Euclidean volume (length) of \vspace{-10 pt} \[
 \Delta_{Y_\bullet}(D)|_{\mathbf{0}^{d-1}}:=\{x\in \RR \mid
               (\overbrace{0,\ldots,0}^{d-1},x)\in \Delta_{Y_\bullet}(D)\}  \]
 is equal to the restricted volume of $D$ to the curve $Y_{d-1}$, which is the limit
 \[
  \vol_{X|Y_{d-1}}(D)=\lim_{m\to\infty}\frac{\dim(H^0(X,mD)|_{Y_{d-1}})}{m} .\]
\end{Theorem B}

See Theorem~\ref{t:main} for a full statement, including the precise general position condition we need on the very ample divisors $A_i$'s.

Theorem~A will follow from Theorem~B because the difference between $Y_{d-1}\cdot D$ and
$\vol_{X|Y_{d-1}}(D)$ can also be read off from the Okounkov bodies, and that very general
complete-intersection curves are enough to span $N_1(X)_{\RR}$, the dual of the
N\'{e}ron-Severi space $\NR{X}$. Theorem~B in turn is proved by introducing certain
graded linear series $V_\bullet(D;\mathbf{a})$ on $Y_{d-1}$ and studying its asymptotic
behaviors; in particular, we will need a way to compute the volume of
$V_\bullet(D;\mathbf{a})$. This is accomplished by generalizing \cite[Theorem~B]{ELMNP2} which
computes the restricted volume by \emph{asymptotic intersection number}. More precisely,
recall from \cite[Definition~2.5]{LM} that a graded linear series $W_\bullet$ on a variety $X$
is said to satisfy condition~(B) if $W_m\ne 0$ for
all $m\gg 0$, and if for all sufficiently large $m$ the rational map $\phi_m\colon
X\dashrightarrow \PP(W_m)$ defined by $|W_m|$ is birational onto its image. Then we have

\begin{Theorem C}
 Let $X$ be a projective variety of dimension $d$, and let $W_\bullet$ be
 a graded linear series on $X$ satisfying the condition~\textup{(B)} above.
 Fix a positive integer $m>0$ sufficiently large so that the linear series $W_m$ defines a
 birational mapping of $X$, and denote by $B_m=\mathrm{Bs}(W_m)$ the base locus of $W_m$.
 We define the moving self-intersection number $(W_m)^{[d]}$ of $W_m$ by choosing
 $d$ general divisors $D_1,\ldots,D_d\in |W_m|$ and setting \[
   (W_m)^{[d]}:=\#\bigl(D_1\cap\cdots\cap D_d\cap(X-B_m)\bigr). \]
 Then the volume of\/ $W_\bullet$, which is by definition \[
   \vol(W_\bullet):= \lim_{m\to\infty}\frac{\dim(W_m)}{m^d/d!}, \]
 can be computed by the following asymptotic intersection number: \[
  \vol(W_\bullet)=\lim_{m\to\infty}\frac{(W_m)^{[d]}}{m^d}.  \]
\end{Theorem C}

This paper is organized into three sections. In Section~\ref{s:gr lin series} we collect some definitions and notations about graded linear series in general, and define the graded linear series 
$V_\bullet(D;\mathbf{a})$ which will play a key role in the proof of Theorem~B. Then we
study the properties of $V_\bullet(D;\mathbf{a})$ in Section~\ref{s:prop of V}, and use them to
prove the theorems in Section~\ref{s:pf of thm}.

\subsection*{Acknowledgements}
The author would like to thank Robert Lazarsfeld and Mircea Musta\c{t}\v{a} for valuable
discussions and suggestions.

\section{Graded linear series} \label{s:gr lin series}

In this section we first introduce some basic definitions and notations about graded linear series which we will need. Then we define a graded linear series $V_\bullet(D;\mathbf{a})$, which will play a key role in the proof of Theorem~B. We refer the readers to \cite[\S 2.4]{Laz} for more details on graded linear series.

\begin{definition} \label{d:gr lin series}
 Let $X$ be an irreducible variety, and let $L$ be a line bundle on $X$. A
 \emph{graded linear series} on $X$ associated to $L$ consists of a collection \[
    V_\bullet=\{V_m\}_{m\in \NN} \]
 of finite dimensional vector subspaces $V_m\subset H^0(X,L^{\otimes m})$, satisfying \[
    V_k\cdot V_\ell \subset V_{k+\ell} \quad \text{for all }k,\ell\in \NN, \]
 where $V_k\cdot V_\ell$ denotes the image of $V_k\otimes V_\ell$ under the homomorphism
 \[
   H^0(X,L^{\otimes k})\otimes H^0(X,L^{\otimes \ell})\longrightarrow H^0(X,L^{\otimes
   (k+\ell)})  \]
 determined by multiplication. It is also required that $V_0$ contains all constant
 functions.
\end{definition}

\begin{notation}
 Let $X$ be a projective variety and let $L$ be a line bundle on $X$. We write $C_\bullet(X,L)$
 to mean the \emph{complete graded linear series} associated to $L$, namely \[
      C_m(X,L)=H^0(X,L^{\otimes m})\quad \text{for all } m\in \NN. \]
 If $D$ is a Cartier divisor on $X$, we will also write $C_\bullet(X,D)$ for $C_\bullet(X,\ox(D))$.
\end{notation}

\begin{definition}
 Given two graded linear series $V_\bullet$ and $W_\bullet$, we define a
 \emph{morphism} $f_\bullet\colon V_\bullet\to W_\bullet$ of graded linear series to be a
 collection of linear maps $f_m\colon V_m\to W_m$, $m\in \NN$, such that \[
   f_{k+\ell}(s_1\otimes s_2)=f_k(s_1)\otimes f_\ell(s_2) \]
 for all $s_1\in V_k$, $s_2\in V_\ell$ and all $k,\ell\in \NN$. It is also required that
 $f_0$ preserves constant functions.
\end{definition}

\begin{example} \label{e:morphism}
 Let $X$ be a projective variety and let $L$ and $M$ be two line bundles on $X$. If $V_\bullet$ is a
 graded linear series associated to $L$, and $s\in H^0(X,M)$, then the linear maps \[
    V_m \to H^0(X,L^{\otimes m}\otimes M^{\otimes m}), \quad s'\mapsto
   s'\otimes s^{\otimes m} \]
 for all $m\in\NN$ form a morphism from $V_\bullet$ to $C_\bullet(X,L\otimes M)$. We
 will denote this morphism as $\mu(s)_\bullet \colon V_\bullet \to C_\bullet(X,L\otimes M)$.
\end{example}

\begin{definition}
 Let $U_\bullet$, $V_\bullet$ and $W_\bullet$ be graded linear series.
 \begin{enumerate}[(a)]
  \item
   We say that $U_\bullet$ is a \emph{subseries} of $V_\bullet$, denoted by $U_\bullet\subset
   V_\bullet$, if $U_m\subset V_m$ for all $m\in\NN$.
  \item
   If $f_\bullet\colon V_\bullet\to W_\bullet$ is a morphism of graded linear series,
   then the \emph{image} of $f_\bullet$, denoted by $\im(f_\bullet)$, is the subseries of
   $W_\bullet$ consisting of the images of $f_m$ for all $m\in\NN$. If $U_\bullet$ is a
   subseries of $W_\bullet$, then the \emph{preimage}
   of $U_\bullet$ under $f_\bullet$, denoted by $f_\bullet^{-1}(U_\bullet)$, is the
   subseries of $V_\bullet$ consisting of the preimages of $U_m$ under $f_m$ for all
   $m\in\NN$.
  \item
   If $V_\bullet$ is a graded linear series on the variety $X$, and $Y$ is a subvariety of $X$,
   then the \emph{restriction} of $V_\bullet$ to $Y$,
   denoted by $V_\bullet|_Y$, is the graded linear series on $Y$ obtained by restricting
   all of the sections in $V_m$ to $Y$ for all $m\in\NN$.
 \end{enumerate}
\end{definition}

\begin{definition}
 Let $W_\bullet$ be a graded linear series on a variety $X$. The \emph{stable base locus} of $W_\bullet$,  denoted by $\mathbf{B}(W_\bullet)$, is the (set-theoretic) intersection of the base loci $\mathrm{Bs}(W_m)$ for all $m\ge 1$: \[
  \mathbf{B}(W_\bullet):=\bigcap_{m\ge 1} \mathrm{Bs}(W_m). \]
 If $W_\bullet=C_\bullet(X,D)$, the complete graded linear series of a divisor $D$, then we will simply denote its stable base locus by $\mathbf{B}(D)$.
\end{definition}

 It is a simple fact that for any graded linear series $W_\bullet$,
$\mathrm{Bs}(W_m)=\mathbf{B}(W_\bullet)$ for all sufficiently large and divisible $m$ (the proof given in \cite[Proposition~2.1.21]{Laz} for complete graded linear series can be used without change).
Hence $\mathbf{B}(D)$ makes sense even if $D$ is a $\QQ$-divisor.

The stable base locus of a divisor $D$ does not depend only on the numerical equivalence class
of $D$, so it is sometimes preferable to work instead with the \emph{augmented base locus}
$\mathbf{B}_+(D)$, defined as \[
  \mathbf{B}_+(D)=\mathbf{B}(D-A) \]
for any small ample $\QQ$-divisor $A$, this being independent of $A$ provided that it is
sufficiently small.

We will also need the following concept of \emph{asymptotic order of vanishing} which was
studied in \cite{ELMNP1}:

\begin{definition}
 Let $X$ be a projective variety, and let $E$ be a prime Weil divisor not contained in the
 singular locus of $X$. Given a graded linear series $W_\bullet$ on $X$, we define \[
    \ord_{E}(W_m):=\min\{\ord_E(s)\mid s\in W_m \}. \]
 If $W_m\ne 0$ for all $m\gg 0$, then we can define the \emph{asymptotic order of
 vanishing} of $W_\bullet$ along $E$ as \[
    \ord_E(W_\bullet):=\lim_{m\to\infty}\frac{\ord_E(W_m)}{m}. \]
 (cf. \cite[Definition~2.2]{ELMNP1}) When
 $W_\bullet$ is the complete graded linear series associated to a Cartier divisor $D$, we
 will simply write $\ord_E(|mD|)$ for $\ord_{E}(W_m)$, and $\ord_E(\|D\|)$ for
 $\ord_{E}(W_\bullet)$.
\end{definition}

Next we want to recall conditions~(A)--(C) on graded linear series introduced in \cite[\S 2.3]{LM}, which are mild requirements needed for most major statements in that paper.
We have already seen condition~(B) in the Introduction.

\begin{definition} \label{d:conditions}
 Let $W_\bullet$ be a graded linear series on an irreducible variety $X$ of dimension $d$.
 \begin{enumerate}[(a)]
  \item
   We say that $W_\bullet$ satisfies condition~(A) with respect to an admissible flag   $Y_\bullet$ if there is an integer $b>0$ such that for every $0\ne s\in W_m$, \[
    \nu_i(s)\le mb \]
   for all $1\le i \le d$.
  \item
   We say that $W_\bullet$ satisfies condition~(B) if $W_m\ne 0$ for
all $m\gg 0$, and if for all sufficiently large $m$ the rational map \[
 \phi_m\colon X\dashrightarrow \PP(W_m) \]
  defined by $|W_m|$ is birational onto its image.
  \item
   Assume that $X$ is projective, and that $W_\bullet$ is a graded linear series associated to a big divisor $D$. We say that $W_\bullet$ satisfies condition~(C) if:
   \begin{itemize}
    \item
  For every $m\gg 0$ there exists an effective divisor $F_m$ on $X$ such that the divisor
  \[  A_m:=mD-F_m \]
  is ample; and
    \item
  For all sufficiently large $p$, \[
  \quad H^0\bigl(X,\ox(pA_m)\bigr)=H^0\bigl(X,\ox(pmD-pF_m)\bigr)\subset W_{pm}\subset
   H^0\bigl(X,\ox(pmD)\bigr), \]
  where the first inclusion is the natural one determined by $pF_m$.
   \end{itemize}
 \end{enumerate}
\end{definition}

In the proof of \cite[Lemma~1.10]{LM}, it was established that condition~(A) holds automatically if $X$ is projective. Hence condition~(A) is insignificant for us since we will be working with projective varieties. Also note that condition~(C) implies condition~(B). 

We will now define a graded linear series which is particularly relevant to our study of Okounkov
bodies. For the basic setting, let $X$ be an irreducible projective variety of dimension $d$,
and let $A_1,\ldots,A_{d-1}$ be general
effective very ample divisors on $X$. Then by Bertini's theorem, for each $r\in \{1,\ldots,d-1\}$, \[
       Y_r:=A_1\cap\cdots\cap A_r \]
is an irreducible subvariety of $X$ which is of codimension $r$ in $X$ and smooth away
from the singular locus $X_{\text{sing}}$ of $X$. Hence if we let $Y_0:=X$ and let $Y_d$ be a
point in $Y_{d-1}- X_{\text{sing}}$, then \[
  Y_\bullet \colon Y_0\supset Y_1 \supset \cdots \supset Y_{d-1} \supset Y_d \]
is an admissible flag on $X$. We let $\nu_{Y_\bullet}=(\nu_1,\ldots,\nu_d)$ be the
valuation determined by the flag $Y_\bullet$ as described in the Introduction.

\begin{definition} \label{d:V}
 Let $X$ and $Y_\bullet$ be as in the preceding paragraph, and let $D$ be a big Cartier divisor
 on $X$. Given an
 $r$-tuple of nonnegative integers $\mathbf{a}=(a_1,\ldots,a_r)$ where $r\in \{0,\ldots,d-1 \}$,
 the space of sections \[
  \{s\in H^0(X,mD) \mid \nu_i(s)\ge ma_i,\ i=1,\ldots,r \} \subset C_m(X,D), \quad m\in\NN \]
 form a subseries of $C_\bullet(X,D)$, and one can define a morphism from this
 subseries to $C_\bullet(Y_r,D-a_1A_1-\cdots-a_rA_r)$ using an iterated restrictions process similar to the one we saw in the Introduction when we defined the valuation $\nu_{Y_\bullet}$. We then set \[
  V_\bullet(D;\mathbf{a})=V_\bullet(D;a_1,\ldots,a_r) \subset C_\bullet(Y_r,D-a_1A_1-\cdots-a_rA_r) \]
 to be the image of this morphism.

 More pedantically, $V_\bullet(D;\mathbf{a})$ can be defined inductively in the following way.
 If $r=0$, then we simply set \[
     V_\bullet(D;\ ) := C_\bullet(Y_0,D). \]
 Assume that $r>0$ and $V_\bullet(D;a_1,\ldots,a_{r-1})$ has been defined. To define
 $V_\bullet(D;a_1,\ldots,a_{r})$, we first pick a section
 $s_r\in H^0(Y_{r-1},\OO_{Y_{r-1}}(A_r))$ such that $\mathrm{div}(s_r)=Y_r$, and let \[
   \mu(s_r^{\otimes a_r})_\bullet \colon C_\bullet(Y_{r-1},D-a_1A_1-\cdots-a_rA_r)
   \longrightarrow  C_\bullet(Y_{r-1},D-a_1A_1-\cdots-a_{r-1}A_{r-1}) \]
 be the morphism as defined in Example~\ref{e:morphism}. Then we define  \[
  V_\bullet(D;a_1,\ldots,a_{r})
  :=\Bigl(\mu(s_r^{\otimes a_r})_\bullet^{-1}\bigl(V_\bullet(D;a_1,\ldots,a_{r-1})\bigr)\Bigr)\Big|_{Y_r}. \]
 It is obvious that the definition does not depend on the choice of $s_r$.
 It will be convenient to also define $V_\bullet(D;\mathbf{a})$ when
 $a_1,\ldots,a_r$ are nonnegative \emph{rational} numbers in the following way:
 let $\ell$ be the smallest positive integer such that $\ell a_i\in \NN$ for all $i$,
 then set  \[
  V_m(D;a_1,\ldots,a_{r}):=
   \begin{cases}
    V_k(\ell D;\ell a_1,\ldots,\ell a_{r}), &\text{if $m=\ell k$ for some $k\in\NN$;} \\
    0, &\text{otherwise.}
   \end{cases}  \]
 Although in general this is not a graded linear series in the sense of
 Definition~\ref{d:gr lin series}, for all of our purposes we are essentially dealing
 with the graded
 linear series $V_\bullet(\ell D;\ell \mathbf{a})$, so no problem will occur.
\end{definition}

\section{Properties of $V_\bullet(D,\mathbf{a})$} \label{s:prop of V}

In this section we will first explain the motivation behind the construction of
$V_\bullet(D;\mathbf{a})$ in Definition~\ref{d:V}. Then we will show that
$V_\bullet(D;\mathbf{a})$ contains the
restricted complete linear series $C_\bullet(X,D-a_1A_1-\cdots-a_rA_r)|_{Y_r}$. A
consequence of this is that $V_\bullet(D;\mathbf{a})$ satisfies the condition~(C) given in
\cite[Definition~2.9]{LM} as long as $|\mathbf{a}|:=\max\{|a_1|,\ldots,|a_r|\}$ is sufficiently
small, which in turn allows us to compute its volume. We will end
with giving a lower bound to the base locus of $V_m(D;\mathbf{a})$ and its order of vanishing
there. These ingredients will all go into the proof of Theorem~B in
Section~\ref{s:pf of thm}.

As one might already notice, the construction of the graded linear series
$V_\bullet(D;\mathbf{a})$ is closely related to the construction of the Okounkov body of $D$.
Recall from the Introduction that given a $d$-dimensional projective variety $X$
and an admissible flag $Y_\bullet$ on $X$, we get a valuation $\nu_{Y_\bullet}$ which
sends a nonzero global section of any line bundle on $X$ to a $d$-tuple of integers. This
allows us to define the \emph{graded semigroup} of a graded linear series $W_\bullet$ on
$X$ (\cite[Definition~1.15]{LM}): \[
  \Gamma_{Y_\bullet}(W_\bullet):=\{(\nu_{Y_\bullet}(s),m)\mid 0\ne s\in W_m,\, m\ge
  0\}\subset \NN^{d}\times \NN. \]
For any graded semigroup $\Gamma\subset \NN^{d}\times \NN$, a closed convex cone $\Sigma(\Gamma)\subset \RR^d \times \RR$ and a closed convex body
$\Delta(\Gamma)\subset \RR^d$ can be constructed: 
\begin{gather*}
 \Sigma(\Gamma):=\text{the closed convex cone spanned by $\Gamma$}; \\
 \Delta(\Gamma):=\{\mathbf{x}\in\RR^d \mid (\mathbf{x},1)\in \Sigma(\Gamma)\}. 
\end{gather*}
Using these notations, the Okounkov body of a big divisor $D$ on $X$ is \[
  \Delta_{Y_\bullet}(D)=\Delta\Bigl(\Gamma_{Y_\bullet}\bigl(C_\bullet(X,D)\bigr)\Bigr). \]
We will subsequently abbreviate $\Gamma_{Y_\bullet}\bigl(C_\bullet(X,D)\bigr)$ as
$\Gamma_{Y_\bullet}(D)$.

The statement of Theorem~B involves the intersection of $\Delta_{Y_\bullet}(D)$ with the
last coordinate axis, so it is natural to study the following more general intersections:

\begin{notation}
 Given a graded semigroup $\Gamma\subset \NN^d\times \NN$, and an $r$-tuple of nonnegative rational
 numbers $\mathbf{a}=(a_1,\ldots,a_r)$ where $r\le d$, we denote by
 $\Gamma|_{\mathbf{a}}\subset \NN^{d-r}\times \NN$ the graded semigroup \[
  \Gamma|_{\mathbf{a}}:= \{(\nu_{r+1},\ldots,\nu_d,m)\in\NN^{d-r}\times \NN
   \mid (a_1m,\ldots,a_rm,\nu_{r+1},\ldots,\nu_d,m)\in\Gamma\}. \]
 Similarly for a subset $S\subset \RR^d$, we denote by $S|_{\mathbf{a}}\subset\RR^{d-r}$
 the subset \[
  S|_{\mathbf{a}}:=\{(\nu_{r+1},\ldots,\nu_d)\in\RR^{d-r}
           \mid (a_1,\ldots,a_r,\nu_{r+1},\ldots,\nu_d)\in S\}. \]
\end{notation}

\begin{remark} \label{r:Prop A.1}
 Note that in general we have 
 $\Delta(\Gamma|_{\mathbf{a}})\subset \Delta(\Gamma)|_{\mathbf{a}}$.
 If $\Delta(\Gamma)|_{\mathbf{a}}$ meets the interior of $\Delta(\Gamma)$, then
 $\Delta(\Gamma|_{\mathbf{a}})= \Delta(\Gamma)|_{\mathbf{a}}$
 by \cite[Proposition~A.1]{LM}.
\end{remark}

To compute the Euclidean volume of $\Delta_{Y_\bullet}(D)|_{\mathbf{0}^{d-1}}$
in Theorem~B, our plan is to study the Euclidean volume of $\Delta_{Y_\bullet}(D)|_{\mathbf{a}}$
and let $\mathbf{a}$ goes to $\mathbf{0}^{d-1}$. Since \[
 \Delta_{Y_\bullet}(D)|_{\mathbf{a}}=\Delta(\Gamma_{Y_\bullet}(D)|_{\mathbf{a}}) \]
by Remark~\ref{r:Prop A.1}, it is thus desirable to realize the semigroup
$\Gamma_{Y_\bullet}(D)|_{\mathbf{a}}$ as the semigroup of some graded linear series. This
is precisely what motivates the definition of $V_\bullet(D;\mathbf{a})$.

\begin{lemma} \label{l:9}
 Let $X$, $Y_\bullet$, $D$, and $\mathbf{a}$ be as in Definition~\ref{d:V}. Then \[
  \Gamma_{Y_\bullet|_{Y_r}}\bigl(V_\bullet(D;\mathbf{a})\bigr)=\Gamma_{Y_\bullet}(D)|_{\mathbf{a}},\]
 where $Y_\bullet|_{Y_r}$ is the admissible flag $Y_r\supset Y_{r+1}\supset\cdots\supset
 Y_d$ on $Y_r$.
\end{lemma}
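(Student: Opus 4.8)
The plan is to unwind the inductive definition of $V_\bullet(D;\mathbf a)$ and match it with the description of $\Gamma_{Y_\bullet}(D)|_{\mathbf a}$ step by step, peeling off one coordinate at a time. The key observation is that the construction of $V_\bullet(D;\mathbf a)$ in Definition~\ref{d:V} is essentially the same process that computes the first $r$ entries of the valuation $\nu_{Y_\bullet}$, so the two sides of the claimed equality are built by the same operations. I would set up the bookkeeping so that an induction on $r$ goes through cleanly; the $r=0$ case is trivial since $V_\bullet(D;\,)=C_\bullet(Y_0,D)$ and $\Gamma_{Y_\bullet}(D)|_{\,}=\Gamma_{Y_\bullet}(D)$. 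After reducing to integral $\mathbf a$ (the rational case is then automatic from the way $V_m(D;\mathbf a)$ is defined for rational $\mathbf a$, which only rescales $m$ and correspondingly rescales the last coordinate of the semigroup), the heart of the argument is the inductive step from $r-1$ to $r$.

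For the inductive step, fix a section $s_r\in H^0(Y_{r-1},\OO_{Y_{r-1}}(A_r))$ with $\mathrm{div}(s_r)=Y_r$, as in Definition~\ref{d:V}. The first step is to understand how multiplication by $s_r^{\otimes ma_r}$ interacts with the valuation: for a section $t\in H^0(Y_{r-1},D-a_1A_1-\cdots-a_rA_r)^{\otimes m}$, one has $\ord_{Y_r}(t\otimes s_r^{\otimes ma_r})=\ord_{Y_r}(t)+ma_r$, and after dividing out the common factor $Y_r^{ma_r}$ and restricting to $Y_r$ the remaining entries $\nu_{r+1},\ldots,\nu_d$ of the valuation are unchanged. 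In other words, the morphism $\mu(s_r^{\otimes a_r})_\bullet$ shifts the $r$-th valuation coordinate by $ma_r$ and leaves everything below it alone. The second step is to check that a section $t\in C_m(Y_{r-1},D-a_1A_1-\cdots-a_rA_r)$ lies in $\mu(s_r^{\otimes a_r})_m^{-1}\bigl(V_m(D;a_1,\ldots,a_{r-1})\bigr)$ precisely when $t\otimes s_r^{\otimes ma_r}\in V_m(D;a_1,\ldots,a_{r-1})$, and then invoke the induction hypothesis on $\Gamma_{Y_\bullet|_{Y_{r-1}}}\bigl(V_\bullet(D;a_1,\ldots,a_{r-1})\bigr)=\Gamma_{Y_\bullet}(D)|_{(a_1,\ldots,a_{r-1})}$ to translate membership into a condition on the valuation vector of $t\otimes s_r^{\otimes ma_r}$. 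Combining this with the first step, membership in $V_m(D;a_1,\ldots,a_{r-1})$ forces $\ord_{Y_r}(t)+ma_r\geq ma_r$ automatically and records $(\nu_{r+1},\ldots,\nu_d,m)$ of $t$ exactly as the slice of $\Gamma_{Y_\bullet}(D)$ at $(a_1,\ldots,a_r)$. Restricting to $Y_r$ (which is what the final $|_{Y_r}$ in the definition does) then identifies $\Gamma_{Y_\bullet|_{Y_r}}\bigl(V_\bullet(D;\mathbf a)\bigr)$ with $\Gamma_{Y_\bullet}(D)|_{\mathbf a}$, as desired.

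I expect the main obstacle to be the careful treatment of the valuation under restriction: one must verify that after passing from $Y_{r-1}$ to $Y_r$ the lower valuation coordinates $\nu_{r+1},\ldots,\nu_d$ computed on $Y_{r-1}$ (as part of the ambient flag $Y_\bullet$) agree with those computed intrinsically on $Y_r$ with respect to the flag $Y_\bullet|_{Y_r}$, and that dividing out $s_r^{\otimes ma_r}$ before restricting does not interfere with this. This is where one genuinely uses that $Y_\bullet$ is an admissible flag built from the $A_i$ and that $Y_r=\mathrm{div}(s_r)|_{Y_{r-1}}$ appears with multiplicity one, so that the section $\widetilde t_r$ obtained after dividing by $Y_r^{\nu_r}$ does not vanish identically on $Y_r$ and its restriction carries the correct lower-order data. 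Once this compatibility is pinned down, the rest is formal unwinding of definitions and an index chase, and the integral-versus-rational reduction is routine.
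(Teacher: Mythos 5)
Your proposal is correct and takes essentially the same approach as the paper, whose entire proof is the one-line observation that $V_\bullet(D;\mathbf{a})$ is constructed precisely so that its graded semigroup equals the slice $\Gamma_{Y_\bullet}(D)|_{\mathbf{a}}$ — you are simply unwinding that observation inductively. One imprecision worth fixing: the inequality ``$\ord_{Y_r}(t)+ma_r\geq ma_r$'' in your inductive step is vacuously true and does no work; what matters is that $t|_{Y_r}\neq 0$ forces $\ord_{Y_r}(t)=0$, i.e.\ $\nu_r(t\otimes s_r^{\otimes ma_r})=ma_r$ exactly, which is how the ``$\geq$'' appearing in the first description of Definition~\ref{d:V} is reconciled with the strict equality demanded by the slice $\Gamma_{Y_\bullet}(D)|_{\mathbf{a}}$ (sections achieving strict inequality at some stage restrict to zero and so contribute nothing to the semigroup).
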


\begin{proof}
 $V_\bullet(D;\mathbf{a})$ is defined in the way that makes this true.
\end{proof}

Another important property of $V_\bullet(D;\mathbf{a})$ is that it satisfies the
condition~\textup{(C)} in Definition~\ref{d:conditions} when $|\mathbf{a}|$ is sufficiently small.

\begin{lemma} \label{l:6}
 The graded linear series $V_\bullet(D;a_1,\ldots,a_r)$ satisfies \[
  C_\bullet(X,D-a_1A_1-\cdots-a_rA_r)|_{Y_r}\subset V_\bullet(D;a_1,\ldots,a_r) \subset C_\bullet(Y_r,D-a_1A_1-\cdots-a_rA_r). \]
 (In the case when $a_1,\ldots,a_r$ are rational numbers and $\ell$ is the smallest positive
 integer such that $\ell a_i\in \NN$ for all $i$, we interpret
 $C_\bullet(X,D-a_1A_1-\cdots-a_rA_r)$ in the following manner: \[
  C_m(X,D-a_1A_1-\cdots-a_rA_r):=
   \begin{cases}
    C_k(X,\ell D-\ell a_1A_1-\cdots-\ell a_rA_r), &\text{if $m=\ell k$;} \\
    0, &\text{otherwise.}
   \end{cases}  \]
  And similarly for $C_\bullet(Y_r,D-a_1A_1-\cdots-a_rA_r)$.)
\end{lemma}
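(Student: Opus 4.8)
The second inclusion requires no work: by construction $V_\bullet(D;a_1,\ldots,a_r)$ is the image of a morphism of graded linear series whose target is $C_\bullet(Y_r,D-a_1A_1-\cdots-a_rA_r)$, and hence is a subseries of it. So the real content is the first inclusion, and the plan is to prove it by induction on $r$, following the inductive description of $V_\bullet(D;\mathbf{a})$ in Definition~\ref{d:V}. The base case $r=0$ is trivial, since $V_\bullet(D;\ )=C_\bullet(Y_0,D)=C_\bullet(X,D)$ and $Y_0=X$, so both sides equal $C_\bullet(X,D)$.

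For the inductive step, the key idea is to exploit the freedom of choice in the auxiliary section $s_r\in H^0(Y_{r-1},\OO_{Y_{r-1}}(A_r))$ appearing in the definition of $V_\bullet(D;a_1,\ldots,a_r)$. I would choose a global section $S_r\in H^0(X,\ox(A_r))$ cutting out the divisor $A_r$ on $X$ and set $s_r:=S_r|_{Y_{r-1}}$; then $\mathrm{div}(s_r)=A_r\cap Y_{r-1}=Y_r$, which is a valid choice by the general-position hypotheses on the $A_i$ from the setup preceding Definition~\ref{d:V}. The point of this choice is that restricting sections from $X$ to $Y_{r-1}$ intertwines multiplication by $S_r$ with multiplication by $s_r$: for any $t\in H^0\bigl(X,m(D-a_1A_1-\cdots-a_rA_r)\bigr)$ one has $(t\otimes S_r^{\otimes ma_r})|_{Y_{r-1}}=t|_{Y_{r-1}}\otimes s_r^{\otimes ma_r}$.

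Given such a $t$, I would put $u:=t|_{Y_{r-1}}\in C_m(Y_{r-1},D-a_1A_1-\cdots-a_rA_r)$ and verify that $u|_{Y_r}=t|_{Y_r}$ belongs to $V_m(D;a_1,\ldots,a_r)$. Unwinding the definition of $V_\bullet(D;a_1,\ldots,a_r)$, this reduces to checking that $\mu(s_r^{\otimes a_r})_m(u)=u\otimes s_r^{\otimes ma_r}$ lies in $V_m(D;a_1,\ldots,a_{r-1})$. But by the intertwining identity, $u\otimes s_r^{\otimes ma_r}=(t\otimes S_r^{\otimes ma_r})|_{Y_{r-1}}$, and $t\otimes S_r^{\otimes ma_r}\in H^0\bigl(X,m(D-a_1A_1-\cdots-a_{r-1}A_{r-1})\bigr)$, so $u\otimes s_r^{\otimes ma_r}\in C_m(X,D-a_1A_1-\cdots-a_{r-1}A_{r-1})|_{Y_{r-1}}$, which is contained in $V_m(D;a_1,\ldots,a_{r-1})$ by the inductive hypothesis. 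This closes the induction. Finally, the case of rational $a_1,\ldots,a_r$ reduces formally to the integral case applied to $\ell D$ and $\ell\mathbf{a}$ (with $\ell$ the common denominator), after unwinding the conventions that define $V_m$ and $C_m$ for rational parameters.

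I do not anticipate a real obstacle: the argument is little more than a careful unwinding of the inductive definition of $V_\bullet(D;\mathbf{a})$. The only points that need attention are choosing the cutting sections $s_r$ to be restrictions of global sections on $X$, so that the restriction maps commute with the multiplication morphisms $\mu(\cdot)_\bullet$, and keeping track of which twist of $D$ each section lives in as one descends the flag $Y_0\supset Y_1\supset\cdots$, especially in the $\QQ$-divisor bookkeeping.
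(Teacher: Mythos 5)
Your proof is correct and takes essentially the same route as the paper: the paper's argument is exactly this induction on $r$, with the same trick of taking the cutting section $s_r$ to be (the restriction to $Y_{r-1}$ of) a global section of $\ox(A_r)$ on $X$, so that restriction commutes with the multiplication morphism $\mu(\cdot)_\bullet$ and the inductive hypothesis applies. The paper merely phrases this a bit more tersely, but the decomposition, the key identity $(s\otimes s_r^{\otimes ma_r})|_{Y_{r-1}}=s|_{Y_{r-1}}\otimes (s_r|_{Y_{r-1}})^{\otimes ma_r}$, and the reduction from $\QQ$-coefficients to integers all agree.
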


\begin{proof}
 It suffices to prove the case when $a_1,\ldots,a_r$ are integers. The containment on the
 right follows from the definition. We show the containment on the left by induction on
 $r$. The case $r=0$ is trivial, so we assume that $r>0$ and \[
  C_\bullet(X,D-a_1A_1-\cdots-a_{r-1}A_{r-1})|_{Y_{r-1}}\subset
  V_\bullet(D;a_1,\ldots,a_{r-1}). \]
 Let $s$ be an arbitrary element in $C_m(X,D-a_1A_1-\cdots-a_{r}A_{r})$, $m\in\NN$. Let
 $s_r\in H^0(X,\ox(A_r))$ be a section whose divisor is $A_r$. Then
 \begin{align*}
  \mu(s_r^{\otimes a_r}|_{Y_{r-1}})_\bullet (s|_{Y_{r-1}})=(s\otimes s_r^{\otimes
  ma_r})|_{Y_{r-1}}\in {}&C_m(X,D-a_1A_1-\cdots-a_{r-1}A_{r-1})|_{Y_{r-1}} \\
    \subset {}&V_m(D;a_1,\ldots,a_{r-1}),
 \end{align*}
 hence $s|_{Y_r}\in V_m(D;a_1,\ldots,a_{r})$ by definition.
\end{proof}

\begin{corollary} \label{c:7}
 The graded linear series $V_\bullet(D;a_1,\ldots,a_r)$ on\/ $Y_r$ satisfies
 condition~\textup{(C)} if\/ $Y_r\nsubseteq \mathbf{B}_+(D)$ and $|\mathbf{a}|$
 is sufficiently small. (Strictly speaking this is an abuse of
 terminology: as remarked toward the end of Definition~\ref{d:V}, what actually satisfies
 condition~\textup{(C)} is $V_\bullet(\ell D;\ell \mathbf{a})$, but this will not cause
 any trouble for us.)

\end{corollary}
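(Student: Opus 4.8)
The plan is to verify the two bullet points in the definition of condition~(C) for the graded linear series $V_\bullet(D;\mathbf{a})$ on $Y_r$, using Lemma~\ref{l:6} to reduce everything to the complete linear series of $D-a_1A_1-\cdots-a_rA_r$ restricted to $Y_r$. Since the conclusion is stated modulo passing to $V_\bullet(\ell D;\ell\mathbf{a})$, I may as well assume $a_1,\ldots,a_r$ are integers throughout; the general case follows by the stated rescaling convention. Write $D_{\mathbf a}:=D-a_1A_1-\cdots-a_rA_r$ for brevity.

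First I would observe that if $|\mathbf{a}|$ is sufficiently small then $D_{\mathbf a}$ is still big, and moreover $Y_r\nsubseteq\mathbf{B}_+(D_{\mathbf a})$: indeed $\mathbf{B}_+$ is an open condition in the divisor class, so $\mathbf{B}_+(D_{\mathbf a})=\mathbf{B}_+(D)$ for $|\mathbf{a}|$ small, and by hypothesis $Y_r\nsubseteq\mathbf{B}_+(D)$. This is the key reduction: it lets me invoke the known description of restricted complete linear series for a divisor whose restricted base locus misses $Y_r$. Concretely, because $Y_r\nsubseteq\mathbf{B}_+(D_{\mathbf a})$, one can write, for each $m\gg 0$, $mD_{\mathbf a}=A'_m+F'_m$ with $A'_m$ ample, $F'_m$ effective, and $Y_r\nsubseteq\mathrm{Supp}(F'_m)$; restricting to $Y_r$ gives $H^0\bigl(Y_r,p(A'_m|_{Y_r})\bigr)\subset H^0(X,pmD_{\mathbf a})|_{Y_r}$ for all $p\gg 0$ via the section cutting out $pF'_m|_{Y_r}$ (this is exactly the standard fact used to prove that the restricted volume behaves well off $\mathbf{B}_+$; compare \cite{ELMNP2}).

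Now I would combine this with Lemma~\ref{l:6}, which gives the sandwich
$C_\bullet(X,D_{\mathbf a})|_{Y_r}\subset V_\bullet(D;\mathbf{a})\subset C_\bullet(Y_r,D_{\mathbf a})$. Setting $F_m:=F'_m|_{Y_r}$ (an effective divisor on $Y_r$, well-defined since $Y_r\nsubseteq\mathrm{Supp}F'_m$) and $A_m:=mD_{\mathbf a}|_{Y_r}-F_m=A'_m|_{Y_r}$, which is ample on $Y_r$, the inclusion
$H^0\bigl(Y_r,\OO_{Y_r}(pA_m)\bigr)\subset H^0(X,pmD_{\mathbf a})|_{Y_r}\subset V_{pm}(D;\mathbf{a})\subset H^0\bigl(Y_r,\OO_{Y_r}(pmD_{\mathbf a})\bigr)$
is exactly condition~(C) for $V_\bullet(D;\mathbf{a})$ (with $Y_r$ playing the role of the ambient variety $X$ and $D_{\mathbf a}|_{Y_r}$ the big divisor). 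The first inclusion here is the one from the previous paragraph, the second is the left-hand containment of Lemma~\ref{l:6}, and the third is the right-hand containment.

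The main obstacle is the passage from ``$Y_r\nsubseteq\mathbf{B}_+(D_{\mathbf a})$'' to the existence of decompositions $mD_{\mathbf a}=A'_m+F'_m$ with $Y_r$ avoiding $F'_m$ and with the restricted sections of $A'_m$ landing inside the restricted sections of $mD_{\mathbf a}$ — i.e. making sure the ample part can be chosen so that its restricted linear series is actually contained in the restriction of the full linear series, not just abstractly comparable. This is where one needs to be a little careful about which multiple $m$ to take and how $F'_m$ is chosen; it is essentially the content of the restricted-volume machinery of \cite{ELMNP2}, and I would either cite it directly or spell out the one-line argument: take $A$ a small ample $\QQ$-divisor with $Y_r\nsubseteq\mathbf{B}(D_{\mathbf a}-A)$, choose $m$ clearing denominators and with $\mathrm{Bs}(m(D_{\mathbf a}-A))$ off $Y_r$, let $F'_m$ be the base-divisor-free-off-$Y_r$ effective divisor with $m(D_{\mathbf a}-A)-F'_m$ globally generated, and set $A'_m:=mD_{\mathbf a}-F'_m$, which is then ample. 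Everything else is bookkeeping with the definitions and the rescaling convention of Definition~\ref{d:V}.
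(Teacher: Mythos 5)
Your proposal is correct and takes essentially the same approach as the paper: both use the sandwich from Lemma~\ref{l:6} to reduce to showing that the restricted complete linear series $C_\bullet(X,D-\sum a_iA_i)|_{Y_r}$ satisfies condition~(C) when $Y_r\nsubseteq\mathbf{B}_+(D-\sum a_iA_i)$, and then observe that a graded linear series containing one satisfying condition~(C) also satisfies it. The paper's proof is a one-liner that cites [LM, Lemma~2.16] for this last fact about restricted complete linear series, whereas you re-derive it from the $\mathbf{B}_+$ decomposition; the content is the same.
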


\begin{proof}
 This is because under these assumptions the restricted complete linear series contained
 in $V_\bullet(D;\mathbf{a})$ already satisfies condition~(C) by \cite[Lemma~2.16]{LM}.
\end{proof}

Next we want to give a lower bound to the base locus
of $V_m(D;\mathbf{a})$ and its order of
vanishing there.

\begin{lemma} \label{l:base locus V}
 Let $X$, $Y_\bullet$, $D$, and $\mathbf{a}$ be as in Definition~\ref{d:V}. If\/ $Y_{i}$
 intersects $Y_{i-1}\cap \mathbf{B}(D)$ properly in $Y_{i-1}$ for all\/ $i\in\{1,\ldots,r\}$, then \[
    \mathbf{B}\bigl(V_\bullet(D;\mathbf{a})\bigr)\supset \bigl(Y_{r}\cap \mathbf{B}(D)\bigr). \]
 Moreover, if $E$ is a prime Weil divisor of $X$ contained in $\mathbf{B}(D)$, and
 if\/ $F$ is an irreducible component of\/ $Y_{r}\cap E$ such that $F$ is not
 contained in the singular locus of $X$, then \[
    \ord_F\bigl(V_m(D;\mathbf{a})\bigr)\ge \ord_{E}(|mD|),\quad\forall\, m\in\NN. \]
\end{lemma}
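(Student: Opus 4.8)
The plan is to reduce to the case where $a_1,\dots,a_r$ are nonnegative integers — exactly the reduction carried out at the start of the proof of Lemma~\ref{l:6}, replacing $(D,\mathbf{a})$ by $(\ell D,\ell\mathbf{a})$ — and then to run two inductions on the ``level'' $i$: one for the base-locus containment, one for the order-of-vanishing bound. The structural fact driving both, read off from Definition~\ref{d:V}, is that any nonzero section $t\in V_m(D;a_1,\dots,a_i)$ on $Y_i$ has the form $t=(w/s_i^{\otimes ma_i})|_{Y_i}$ for some nonzero $w\in V_m(D;a_1,\dots,a_{i-1})$ on $Y_{i-1}$ that is divisible by $s_i^{\otimes ma_i}$. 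Throughout I will use that the $A_i$ are general very ample divisors, so that (Bertini) each $Y_i$ is irreducible, smooth away from $X_{\text{sing}}$, with $\mathrm{div}(s_i)=Y_i$ on $Y_{i-1}$; combining this with the hypothesis on $\mathbf{B}(D)$, I may assume that for each $i\le r$ no irreducible component of $Y_{i-1}\cap\mathbf{B}(D)$ and none of $Y_{i-1}\cap E$ is contained in $A_i$, and that $Y_i\cap E$ is pure of codimension $i+1$ in $X$.

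First I would prove the base-locus containment. Put $Z_i:=Y_i\cap\mathbf{B}(D)$, and show by induction on $i$ that $\mathrm{Supp}\,\mathrm{div}(t)\supseteq Z_i$ for every nonzero $t\in V_m(D;a_1,\dots,a_i)$ and every $m\ge1$; the case $i=0$ is just the fact that every section in $H^0(X,mD)$ vanishes on $\mathbf{B}(D)\subseteq\mathrm{Bs}(|mD|)$. For the inductive step, write $t=(w/s_i^{\otimes ma_i})|_{Y_i}$: from $\mathrm{div}(w)=\mathrm{div}(w/s_i^{\otimes ma_i})+ma_i\,Y_i$ and the inductive hypothesis $\mathrm{Supp}\,\mathrm{div}(w)\supseteq Z_{i-1}$ one gets $\mathrm{Supp}\,\mathrm{div}(w/s_i^{\otimes ma_i})\supseteq Z_{i-1}\setminus Y_i$, and since no component of $Z_{i-1}$ lies in $Y_i$ the set $Z_{i-1}\setminus Y_i$ is dense in $Z_{i-1}$, so the closed set $\mathrm{Supp}\,\mathrm{div}(w/s_i^{\otimes ma_i})$ contains all of $Z_{i-1}$; restricting to $Y_i$ yields $\mathrm{Supp}\,\mathrm{div}(t)\supseteq Z_{i-1}\cap Y_i=Z_i$. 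Taking $i=r$ and intersecting over all $m$ gives $Y_r\cap\mathbf{B}(D)\subseteq\mathbf{B}(V_\bullet(D;\mathbf{a}))$.

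Next, for the order-of-vanishing bound, given $E$ and $F$ I would first build a chain $E=F_0\supseteq F_1\supseteq\cdots\supseteq F_r=F$ in which each $F_{i-1}$ is an irreducible component of $Y_{i-1}\cap E$ containing $F_i$; then $F_0=E$, and as $F=F_r$ is not contained in $X_{\text{sing}}$ neither is any $F_i\supseteq F_r$, so each $F_i$ is a prime Weil divisor of $Y_i$ on which $\ord_{F_i}$ is a genuine valuation, $F_{i-1}\not\subseteq A_i$, and $F_i$ is an irreducible component of $A_i\cap F_{i-1}$. I then prove by induction on $i$ that $\ord_{F_i}(u)\ge\ord_E(|mD|)$ for every nonzero $u\in V_m(D;a_1,\dots,a_i)$, the case $i=0$ being the definition of $\ord_E(|mD|)$. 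For the step, with $u=(w/s_i^{\otimes ma_i})|_{Y_i}$: since $\mathrm{div}(s_i)=Y_i$ and $F_{i-1}\ne Y_i$, we have $\ord_{F_{i-1}}(s_i)=0$, hence $\ord_{F_{i-1}}(w/s_i^{\otimes ma_i})=\ord_{F_{i-1}}(w)\ge\ord_E(|mD|)$ by induction; and restriction to the Cartier divisor $Y_i$ cannot lower the order along $F_i$. For this last assertion one works in the $2$-dimensional regular local ring $R=\mathcal{O}_{Y_{i-1},\eta_{F_i}}$: there $F_{i-1}$ is cut out by a prime element $\pi$ and $Y_i$ by some $h\notin(\pi)$, so if the germ of $w/s_i^{\otimes ma_i}$ is $\pi^{k}g'$ with $k=\ord_E(|mD|)$ and $g'\in R$, its image in the DVR $R/(h)=\mathcal{O}_{Y_i,\eta_{F_i}}$ is $\bar{\pi}^{k}\bar{g}'$ with $\bar{\pi}$ in the maximal ideal, whence $\ord_{F_i}(u)\ge k$. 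The case $i=r$ is the ``moreover'' statement.

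The part I expect to be the main obstacle is the general-position bookkeeping in the first paragraph: specifying exactly which finitely many subvarieties the (already fixed) general divisors $A_i$ must avoid so that the chain $F_0\supseteq\cdots\supseteq F_r$ consists of honest prime divisors of the $Y_i$, none lying in $A_{i+1}$ or in $X_{\text{sing}}$, and so that the support and dimension statements invoked in both inductions hold. This is exactly where the hypothesis ``$Y_i$ intersects $Y_{i-1}\cap\mathbf{B}(D)$ properly in $Y_{i-1}$'' is used, and it is cleanest to state it in conjunction with the genericity already built into the flag (cf. Theorem~\ref{t:main}). A secondary point needing care is the ``restriction does not lower the order'' step: it must be carried out at a generic point of $F_i$ outside $X_{\text{sing}}$, where $Y_{i-1}$ is regular and $Y_i$ is a Cartier divisor not containing $F_{i-1}$, so that $R$ is regular local of dimension $2$ and $R/(h)$ is a DVR.
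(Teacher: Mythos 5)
Your proof is correct and follows essentially the same route as the paper's: induction on $r$ (or your $i$), unwinding a section of $V_m(D;a_1,\ldots,a_r)$ as the restriction to $Y_r$ of a section $s'$ on $Y_{r-1}$ with $s'\otimes s_r^{\otimes ma_r}\in V_m(D;a_1,\ldots,a_{r-1})$, using the proper-intersection hypothesis so that vanishing on $Y_{r-1}\cap\mathbf{B}(D)\setminus Y_r$ propagates by closure, and taking a component $F'$ of $Y_{r-1}\cap E$ containing $F$ for the order bound. The only difference is that you spell out (correctly, via the local ring $\mathcal{O}_{Y_{i-1},\eta_{F_i}}$) the step that restriction to $Y_i$ does not decrease $\ord_F$, which the paper asserts without elaboration.
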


\begin{proof}
 We will proceed by induction on $r$. The case $r=0$
 is trivial. Assuming that $r>0$, then by definition any section  \[
      s\in V_m(D;a_1,\ldots,a_r)\subset H^0\bigl(Y_r,m(D-a_1A_1-\cdots-a_rA_r)\bigr) \]
 is the restriction to $Y_r$ of some section \[
      s'\in H^0\bigl(Y_{r-1},m(D-a_1A_1-\cdots-a_rA_r)\bigr) \]
 such that \[
      s'\otimes s_r^{\otimes ma_r}\in V_m(D;a_1,\ldots,a_{r-1})
                 \subset H^0\bigl(Y_{r-1},m(D-a_1A_1-\cdots-a_{r-1}A_{r-1})\bigr), \]
 where $s_r\in H^0(Y_{r-1},A_r)$ is a section whose divisor is $Y_r$. By the induction
 hypothesis, $s'\otimes s_r^{\otimes ma_r}$ vanishes on $Y_{r-1}\cap \mathbf{B}(D)$,
 thus $s'$ vanishes on $Y_{r-1}\cap \mathbf{B}(D)$ since $Y_r$ intersects
 $Y_{r-1}\cap \mathbf{B}(D)$ properly. Hence
 $s=s'|_{Y_r}$ vanishes on $Y_{r}\cap \mathbf{B}(D)$, proving that $Y_{r}\cap
 \mathbf{B}(D)$ is contained in the stable base locus of $V_\bullet(D;a_1,\ldots,a_r)$.
 Let $F'\subset Y_{r-1}$ be an irreducible component of $Y_{r-1}\cap E$ containing $F$.
 Then \[
  \ord_{F'}(s'\otimes s_r^{\otimes ma_r})\ge \ord_{E}(|mD|) \]
 by the induction hypothesis, hence \[
  \ord_{F'}(s')\ge \ord_{E}(|mD|) \]
 since $Y_r$ does not contain $F'$. Therefore \[
  \ord_F(s)=\ord_F(s'|_{Y_r})\ge \ord_{F'}(s')\ge \ord_{E}(|mD|). \]
\end{proof}

\section{Proof of theorems} \label{s:pf of thm}

We start with the proof of Theorem~C:

\begin{proof}[Proof of Theorem~C]
 Since $W_\bullet$ satisfies condition~(B), it must belong to a big divisor $L$.
 Fix a positive integer $m>0$ sufficiently large so that the linear series $W_m$ defines a
 birational mapping of $X$. Let $\pi_m \colon X_m\to X$ be a resolution of the base ideal
 $\mathfrak{b}_m:=\mathfrak{b}(W_m)$. Then we have a decomposition \[
  \pi^*_m |W_m|= |M_m| + F_m, \]
 where $F_m$ is the fixed divisor (i.e. $\mathfrak{b}_m\OO_{X_m}=\OO_{X_m}(-F_m)$),
 and  \[
      M_m\subset H^0(X_m,\pi^*_m L -F_m)  \]
 is a base-point-free linear series. Let $M_{m,\bullet}$ be the graded linear series on
 $X_m$ associated to $\pi^*_m L -F_m$ given by \[
  M_{m,k}:=\im\Bigl(S^k(M_m)\to H^0\bigl(X_m,k(\pi^*_m L -F_m)\bigr) \Bigr). \]
 By Fujita's approximation theorem \cite[Theorem~3.5]{LM}, for any $\epsilon>0$, there
 exists an integer $m_0=m_0(\epsilon)$ such that if $m\ge m_0$, then  \[
   \vol(W_\bullet)-\epsilon\ \le\ \frac{1}{m^d}\cdot\lim_{k\to\infty} \frac{\dim M_{m,k}}{k^d/d!}\ \le
      \ \vol(W_\bullet). \]
 Since we assume that $m$ is sufficiently large so that the morphism defined by the linear series
 $M_m$ maps $X_m$ birationally onto its image $X'_m$ in some $\PP^N$, we have \[
  \lim_{k\to\infty} \frac{\dim M_{m,k}}{k^d/d!}=(\OO_{\PP^N}(1)|_{X'_m})^d=(\pi^*_m L -F_m)^d=(W_m)^{[d]}. \]
 Hence the desired conclusion follows.
\end{proof}

Before we go on to prove Theorem~B, recall that in its statement the very ample divisors
$A_i$'s are required to be \emph{very general}. The precise requirement we will need is
the assumptions on the flag $Y_\bullet$ in Lemma~\ref{l:base locus C} below.

\begin{definition}
 Let $Z$ be a variety, and let $Y\subset Z$ be a prime divisor. Let $C\subset Z$ be a
 closed algebraic subset, and denote by $C_1,\ldots,C_n$ the irreducible components of
 $C$. We say that $Y$ intersects $C$ \emph{very properly} in $Z$ if for every $I\subset \{1,\ldots,n\}$
 such that $\bigcap_{i\in I}C_i \ne \emptyset$, $Y$ does not contain any irreducible
 component of $\bigcap_{i\in I}C_i$.
\end{definition}

\begin{lemma} \label{l:base locus C}
 Let $X$, $Y_\bullet$, and $D$ be as in Definition~\ref{d:V}, and assume that
 $X$ is normal. Let $E_1,\ldots,E_n$ be all of the irreducible $(d-1)$-dimensional
 components of\/ $\mathbf{B}(D)$. Assume that $Y_{i}$ intersects
 $Y_{i-1}\cap \mathbf{B}(D)$ very properly in $Y_{i-1}$ for all $i\in\{1,\ldots,d\}$.
 Then \[
   \mathbf{B}(C_\bullet(X,D)|_{Y_{d-1}})= \bigl(Y_{d-1}\cap \mathbf{B}(D)\bigr)
    =\coprod_{i=1}^n(Y_{d-1}\cap E_i). \]
 Moreover, let $m\in\NN$ be sufficiently large and divisible so that
 $\mathrm{Bs}(|mD|)=\mathbf{B}(D)$. If, in addition to the above assumptions,
 the curve $Y_{d-1}$ intersects each of the $E_i$'s transversally at smooth points
 of\/ $X$, and none of these intersection points lies in an embedded component of the base scheme of
 $|mD|$, then \[
    \ord_p(C_m(X,D)|_{Y_{d-1}})= \ord_{E_i}(|mD|) \]
 for every\/ $p\in Y_{d-1}\cap E_i$, $i\in\{1,\ldots,n\}$.
\end{lemma}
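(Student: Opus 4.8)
The plan is to prove the two displayed assertions separately; the first is essentially formal, while the second is where normality, the transversality of $Y_{d-1}$ with the $E_i$, and the avoidance of embedded components all come into play. For the set-theoretic equalities, note first that for each fixed $m$ the base locus on $Y_{d-1}$ of the restricted series $C_m(X,D)|_{Y_{d-1}}$ is exactly $Y_{d-1}\cap\mathrm{Bs}(|mD|)$, since the zero set on $Y_{d-1}$ of a restriction $s|_{Y_{d-1}}$ is $Y_{d-1}\cap\{s=0\}$; intersecting over all $m\ge1$ gives $\mathbf{B}\bigl(C_\bullet(X,D)|_{Y_{d-1}}\bigr)=Y_{d-1}\cap\mathbf{B}(D)$. (The inclusion ``$\supseteq$'' is also the case $\mathbf{a}=\mathbf{0}^{d-1}$ of Lemma~\ref{l:base locus V}, using that $V_\bullet(D;\mathbf{0}^{d-1})=C_\bullet(X,D)|_{Y_{d-1}}$ and that ``very properly'' implies ``properly''.) For the refinement $Y_{d-1}\cap\mathbf{B}(D)=\coprod_{i=1}^n\bigl(Y_{d-1}\cap E_i\bigr)$, I would prove by induction on $r\in\{0,\dots,d-1\}$ the two statements: (i) every irreducible component of $Y_r\cap\mathbf{B}(D)$ is either a component of some $Y_r\cap E_i$ of dimension exactly $d-1-r$, or else has dimension $\le d-2-r$; and (ii) $\dim(Y_r\cap E_i\cap E_j)\le d-2-r$ for all $i\ne j$. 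The base case $r=0$ just records that the $E_i$ are the $(d-1)$-dimensional components of $\mathbf{B}(D)$ and that two distinct prime divisors on $X$ meet in codimension~$\ge 2$. For the inductive step one intersects with $Y_r=Y_{r-1}\cap A_r$: since $A_r$ is ample and, by the definition of ``very properly'', $Y_r$ contains no irreducible component of any intersection of irreducible components of $Y_{r-1}\cap\mathbf{B}(D)$, every nonempty slice drops its dimension by exactly one (and along the way each $Y_r\cap E_i$ stays pure of dimension $d-1-r$ and nonempty, an ample divisor meeting every positive-dimensional subvariety). Taking $r=d-1$, statement~(i) cuts the lower-dimensional components of $\mathbf{B}(D)$ down to $\emptyset$ and leaves the finite nonempty sets $Y_{d-1}\cap E_i$, and statement~(ii) gives $Y_{d-1}\cap E_i\cap E_j=\emptyset$ for $i\ne j$, so the union is disjoint.

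For the order-of-vanishing statement, fix $p\in Y_{d-1}\cap E_i$ and set $e_i:=\ord_{E_i}(|mD|)$. The inequality $\ord_p\bigl(C_m(X,D)|_{Y_{d-1}}\bigr)\ge e_i$ is the case $\mathbf{a}=\mathbf{0}^{d-1}$, $E=E_i$, $F=\{p\}$ of Lemma~\ref{l:base locus V} (legitimate because $p$ is a smooth point of $X$ and $\{p\}$ is a component of $Y_{d-1}\cap E_i$). For the reverse inequality I would pass to the base ideal $\mathfrak{b}:=\mathfrak{b}(|mD|)\subset\ox$: the base ideal of $C_m(X,D)|_{Y_{d-1}}$ on $Y_{d-1}$ is the image $\mathfrak{b}\cdot\OO_{Y_{d-1}}$, so $\ord_p\bigl(C_m(X,D)|_{Y_{d-1}}\bigr)$ equals the order at $p$ of $\mathfrak{b}\cdot\OO_{Y_{d-1}}$ in the discrete valuation ring $\OO_{Y_{d-1},p}$. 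Now localize $\mathfrak{b}$ at $p$. Because $\mathrm{Bs}(|mD|)=\mathbf{B}(D)$ as sets, the minimal primes of $\mathfrak{b}$ are the ideals of the irreducible components of $\mathbf{B}(D)$; by the first part, $p$ lies on $E_i$ and on no other component of $\mathbf{B}(D)$, and by hypothesis $p$ lies on no embedded component of the base scheme, so $\mathcal{I}_{E_i}$ is the only associated prime of $\mathfrak{b}$ through $p$. Hence $\mathfrak{b}\,\OO_{X,p}$ is the localization at $p$ of the isolated $\mathcal{I}_{E_i}$-primary component of $\mathfrak{b}$. Since $X$ is normal, $\OO_{X,E_i}$ is a discrete valuation ring and $\mathfrak{b}\,\OO_{X,E_i}=\mathfrak{m}_{E_i}^{\,e_i}$, so that primary component is the symbolic power $\mathcal{I}_{E_i}^{(e_i)}$; since $X$ is regular at $p$, $\mathcal{I}_{E_i}\,\OO_{X,p}=(g)$ for a prime element $g$, whence $\mathfrak{b}\,\OO_{X,p}=\mathcal{I}_{E_i}^{(e_i)}\OO_{X,p}=(g^{\,e_i})$. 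Restricting to $Y_{d-1}$ gives $\mathfrak{b}\,\OO_{Y_{d-1},p}=(h^{\,e_i})$, where $h$ denotes the image of $g$, and transversality of $Y_{d-1}$ with $E_i$ at the smooth point $p$ makes $h$ a uniformizer of $\OO_{Y_{d-1},p}$; hence that order is $e_i=\ord_{E_i}(|mD|)$, as wanted.

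I expect the main obstacle to be the inductive argument in the first part: one must check that the combinatorial ``very properly'' condition really forces every slice by $Y_r$ of an intersection of irreducible components of $Y_{r-1}\cap\mathbf{B}(D)$ to be proper, so that neither a lower-dimensional component of $\mathbf{B}(D)$ nor an incidence $E_i\cap E_j$ survives down to the curve $Y_{d-1}$; this bookkeeping is also precisely what isolates the exact general-position hypothesis needed on the very ample divisors $A_i$. In the second part the one delicate point is the commutative-algebra identity $\mathfrak{b}\,\OO_{X,p}=\mathcal{I}_{E_i}^{(e_i)}\OO_{X,p}$, which uses normality (to recognize the $\mathcal{I}_{E_i}$-primary component of $\mathfrak{b}$ as a symbolic power) together with the hypotheses that $p$ avoids the other components of $\mathbf{B}(D)$ and all embedded components of the base scheme (so that no other primary component of $\mathfrak{b}$ is visible at $p$).
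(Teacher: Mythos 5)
The paper's own ``proof'' of this lemma is the single word \emph{obvious}; the author evidently regards the whole statement as routine bookkeeping under the stated genericity hypotheses. Your writeup supplies the missing details, and on the whole it is correct and does what a reader of the paper would be expected to reconstruct.

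Your treatment of the order-of-vanishing equality is clean and, as far as I can see, airtight. The inequality $\ge$ is indeed the $\mathbf{a}=\mathbf{0}^{d-1}$, $F=\{p\}$ case of Lemma~\ref{l:base locus V} (using that $V_\bullet(D;\mathbf{0}^{d-1})=C_\bullet(X,D)|_{Y_{d-1}}$, that ``very properly'' implies ``properly'', and that $p$ is a smooth point of $X$). For $\le$ the key observations are exactly right: the restricted base ideal at $p$ is $\mathfrak{b}\,\OO_{Y_{d-1},p}$; since $p$ lies on $E_i$ and on no other irreducible or embedded component of the base scheme of $|mD|$, the localization $\mathfrak{b}\,\OO_{X,p}$ equals the localization of the isolated $\mathcal{I}_{E_i}$-primary component; normality identifies that primary component as the symbolic power $\mathcal{I}_{E_i}^{(e_i)}$; regularity of $\OO_{X,p}$ makes it literally $(g^{e_i})$; and transversality of $Y_{d-1}$ with $E_i$ at $p$ makes the image of $g$ a uniformizer. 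This is the place where all three extra hypotheses (normality, transversality, avoidance of embedded components) are used, and you isolate their roles well.

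On the set-theoretic part, the equality $\mathbf{B}(C_\bullet(X,D)|_{Y_{d-1}})=Y_{d-1}\cap\mathbf{B}(D)$ is indeed immediate from $\mathrm{Bs}(C_m(X,D)|_{Y_{d-1}})=Y_{d-1}\cap\mathrm{Bs}(|mD|)$ for each $m$. Your induction for the disjointness and pure-dimension statements is morally the right dimension count. One slightly delicate point, which you yourself flag: as literally written, ``very properly'' only speaks about irreducible components of the various $\bigcap_{i\in I}C_i$ where the $C_i$ are components of $Y_{r-1}\cap\mathbf{B}(D)$, whereas your step (ii) also needs $Y_r$ to cut $Y_{r-1}\cap E_i\cap E_j$ with a dimension drop. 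The two are linked only once one knows, for example by Bertini applied to the very general $A_1,\dots,A_{r-1}$, that each $Y_{r-1}\cap E_i$ is irreducible (for $r-1\le d-2$) and hence a component of $Y_{r-1}\cap\mathbf{B}(D)$; then every component of $Y_{r-1}\cap E_i\cap E_j$ sits inside an intersection of components of $Y_{r-1}\cap\mathbf{B}(D)$ and the ``very properly'' hypothesis applies as you want. Alternatively, one can skip (ii) entirely and just observe that $\dim(E_i\cap E_j)\le d-2$, so for a very general complete intersection $Y_{d-1}$ of $d-1$ ample divisors one has $Y_{d-1}\cap E_i\cap E_j=\emptyset$ by a plain dimension count that does not pass through the ``very properly'' combinatorics at all. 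Either way the conclusion holds; the wrinkle is one of exposition rather than a genuine gap.
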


\begin{proof}
 This is obvious.
\end{proof}

\begin{corollary} \label{c:restr vol}
 If the very ample divisors $A_1,\ldots,A_{d-1}$ are very general so that
 Lemma~\ref{l:base locus C} holds for all $m$ such that $\mathrm{Bs}(|mD|)=\mathbf{B}(D)$, then \[
  \vol_{X|Y_{d-1}}(D)=Y_{d-1}\cdot D - \sum_{i=1}^n \sum_{p\in Y_{d-1}\cap E_i}\! \ord_{E_i}\|D\|. \]
\end{corollary}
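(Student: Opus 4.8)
The plan is to compute $\vol_{X|Y_{d-1}}(D)$ directly as the volume of the restricted complete graded linear series $W_\bullet := C_\bullet(X,D)|_{Y_{d-1}}$ on the curve $Y_{d-1}$, using Theorem~C to express that volume as an asymptotic moving-self-intersection number, and then accounting for the fixed part of $|mD|$ along $Y_{d-1}$ via Lemma~\ref{l:base locus C}. First I would record that $\vol_{X|Y_{d-1}}(D) = \lim_{m\to\infty}\dim(H^0(X,mD)|_{Y_{d-1}})/m = \vol(W_\bullet)$, since $Y_{d-1}$ is a curve so the normalization $m^d/d!$ becomes $m$ up to the constant factor; here I use that $Y_{d-1}\nsubseteq\mathbf{B}_+(D)$ (a consequence of the very general choice of the $A_i$'s), so that $W_\bullet$ satisfies condition~(B) for $m$ large by \cite[Lemma~2.16]{LM}, and Theorem~C applies. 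On a smooth curve the moving self-intersection number $(W_m)^{[1]}$ is simply the degree of the moving part of $W_m$, i.e. $\deg(W_m) - \deg(\text{fixed part of }W_m)$.

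Next I would identify the two terms. The full degree is $\deg(mD|_{Y_{d-1}}) = m\,(Y_{d-1}\cdot D)$, so the first summand contributes $Y_{d-1}\cdot D$ in the limit. For the fixed part, note that for $m$ sufficiently large and divisible with $\mathrm{Bs}(|mD|)=\mathbf{B}(D)$, Lemma~\ref{l:base locus C} gives $\mathbf{B}(W_\bullet) = \coprod_{i=1}^n (Y_{d-1}\cap E_i)$ and, at each point $p\in Y_{d-1}\cap E_i$, the exact local vanishing order $\ord_p(C_m(X,D)|_{Y_{d-1}}) = \ord_{E_i}(|mD|)$. (The only curve-components of $\mathbf{B}(D)$ meeting $Y_{d-1}$ come from the divisorial components $E_i$, because $Y_{d-1}$ intersects the lower-dimensional components of $\mathbf{B}(D)$ in the empty set by very properness.) Summing over all such base points, the degree of the fixed divisor of $W_m$ equals $\sum_{i=1}^n \sum_{p\in Y_{d-1}\cap E_i} \ord_{E_i}(|mD|)$. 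Dividing by $m$ and letting $m\to\infty$ along the admissible values converts $\ord_{E_i}(|mD|)/m$ into $\ord_{E_i}\|D\|$, and a standard subadditivity/semicontinuity argument (the fixed part only decreases in the limit, and the $\ord_E(|mD|)/m$ are subadditive hence convergent to the infimum) shows the limit of $(W_m)^{[1]}/m$ is exactly $Y_{d-1}\cdot D - \sum_i\sum_p \ord_{E_i}\|D\|$.

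The main technical point to be careful about is the passage to the limit: Theorem~C computes $\vol(W_\bullet)$ as $\lim (W_m)^{[1]}/m$ over \emph{all} large $m$, whereas Lemma~\ref{l:base locus C} controls the fixed part only for the large and divisible $m$ with $\mathrm{Bs}(|mD|)=\mathbf{B}(D)$; one must check that restricting to this cofinal subsequence does not change the limit, which follows because the limit in Theorem~C exists. One should also confirm that for those good $m$ the intersection points $p\in Y_{d-1}\cap E_i$ are exactly the base points of $W_m$ with the stated multiplicities — i.e. that $Y_{d-1}$ acquires no extra base points from embedded components of the base scheme of $|mD|$ — which is guaranteed by the transversality and general-position hypotheses imported into the corollary's statement. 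Granting these points, the displayed formula follows, and indeed this is routine once Lemma~\ref{l:base locus C} and Theorem~C are in hand, consistent with the author's one-line proof.
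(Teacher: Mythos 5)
Your proposal is correct and follows essentially the same route as the paper: identify $\vol_{X|Y_{d-1}}(D)$ with $\vol(W_\bullet)$ for the restricted series $W_\bullet = C_\bullet(X,D)|_{Y_{d-1}}$, invoke Theorem~C to express this as the limit of moving degrees $\bigl(\deg(W_m)-\deg(\text{fixed part})\bigr)/m$, and use Lemma~\ref{l:base locus C} to identify the fixed part with $\sum_i\sum_{p\in Y_{d-1}\cap E_i}\ord_{E_i}(|mD|)$. You have also spelled out the bookkeeping (passing to the cofinal subsequence of good $m$, ruling out embedded base points) that the paper leaves implicit in its one-line computation.
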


\begin{proof}
 Let $m$ be sufficiently large and divisible so that $\mathrm{Bs}(|mD|)=\mathbf{B}(D)$.
 By Theorem~C and Lemma~\ref{l:base locus C},
 \begin{align*}
  \vol_{X|Y_{d-1}}(D)&=\lim_{m\to\infty} \frac{Y_{d-1}\cdot(mD)-\sum_{i=1}^n \sum_{p\in Y_{d-1}\cap E_i}
  \ord_{p}(C_m(X,D)|_{Y_{d-1}})}{m} \\
  &= Y_{d-1}\cdot D - \sum_{i=1}^n \sum_{p\in Y_{d-1}\cap E_i} \lim_{m\to\infty}
   \frac{\ord_{E_i}(|mD|)}{m} \\
  &= Y_{d-1}\cdot D - \sum_{i=1}^n \sum_{p\in Y_{d-1}\cap E_i}\! \ord_{E_i}\|D\|.
 \end{align*}
\end{proof}

We will now prove Theorem~B by proving the following more precise statement:

\begin{theorem} \label{t:main}
 Let $X$, $Y_\bullet$, and $D$ be as in Definition~\ref{d:V}. Assume that
 $Y_{r}\nsubseteq \mathbf{B}_+(D)$ for all $r\in\{0,\ldots,d-1\}$.
 \begin{enumerate}[\upshape (a)]
  \item
    If $D$ is ample, then for any\/ $r\in\{0,\ldots,d-1\}$, \[
     \vol_{\RR^{d-r}}\bigl(\Delta_{Y_\bullet}(D)|_{\mathbf{0}^r}\bigr)
      =\frac{\vol_{X|Y_r}(D)}{(d-r)!}=\frac{Y_r\cdot D^{d-r}}{(d-r)!}, \]
    where $\mathbf{0}^r$ denotes $(\overbrace{0,\ldots,0}^{r})$.
  \item
    If $D$ is big but not necessarily ample, and assume that $X$ is normal and that the very ample
    divisors $A_1,\ldots,A_{d-1}$ are very general so that Lemma~\ref{l:base locus C} and
    Corollary~\ref{c:restr vol}
    hold, then the first equality in \textup{(a)} still holds when $r=d-1$, i.e. \[
     \vol_{\RR^{1}}\bigl(\Delta_{Y_\bullet}(D)|_{\mathbf{0}^{d-1}}\bigr)=\vol_{X|Y_{d-1}}(D).\]
 \end{enumerate}
\end{theorem}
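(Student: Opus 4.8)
The plan is to prove part (a) by induction on $d-r$ and then derive part (b) by a limiting argument, relating both sides to the quantities controlled by the graded linear series $V_\bullet(D;\mathbf{a})$ and its volume, which we can compute via Corollary~\ref{c:7} and Theorem~C.

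\smallskip

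\noindent\textbf{Part (a).} First I would reduce everything to slices. By Lemma~\ref{l:9} and Remark~\ref{r:Prop A.1}, for a tuple $\mathbf{a}=(a_1,\ldots,a_r)$ of sufficiently small nonnegative rationals we have, provided the relevant slice meets the interior,
\[
 \Delta_{Y_\bullet}(D)|_{\mathbf{a}} = \Delta(\Gamma_{Y_\bullet}(D)|_{\mathbf{a}}) = \Delta_{Y_\bullet|_{Y_r}}\bigl(V_\bullet(D;\mathbf{a})\bigr).
\]
Taking $\mathbf{a}=\mathbf{0}^r$, this identifies $\Delta_{Y_\bullet}(D)|_{\mathbf{0}^r}$ with the Okounkov body on $Y_r$ of the graded linear series $V_\bullet(D;\mathbf{0}^r)$ with respect to the residual flag $Y_\bullet|_{Y_r}$; I would first check that for $D$ ample the hypothesis $Y_r\not\subset\mathbf{B}_+(D)$ guarantees $\mathbf{B}_+(D)=\emptyset$, so the interior condition is automatic (one can also perturb $\mathbf{a}$ slightly if needed and use continuity). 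Now $V_\bullet(D;\mathbf{0}^r)$ sits between the restricted complete series $C_\bullet(X,D)|_{Y_r}$ and $C_\bullet(Y_r,D)$ by Lemma~\ref{l:6}, and since $D$ is ample the restricted series already cuts out $D^{d-r}\cdot Y_r/(d-r)!$ as its volume and satisfies condition~(B) (even (C)); hence $\vol\bigl(V_\bullet(D;\mathbf{0}^r)\bigr)=Y_r\cdot D^{d-r}=\vol_{X|Y_r}(D)$. Then the basic theorem of \cite{LM} relating the Euclidean volume of an Okounkov body to the volume of the graded linear series (valid under conditions (A)--(C), all of which hold here) gives
\[
 \vol_{\RR^{d-r}}\bigl(\Delta_{Y_\bullet}(D)|_{\mathbf{0}^r}\bigr)
  = \vol_{\RR^{d-r}}\Bigl(\Delta_{Y_\bullet|_{Y_r}}\bigl(V_\bullet(D;\mathbf{0}^r)\bigr)\Bigr)
  = \frac{\vol\bigl(V_\bullet(D;\mathbf{0}^r)\bigr)}{(d-r)!}
  = \frac{Y_r\cdot D^{d-r}}{(d-r)!},
\]
which is the assertion. (The induction on $d-r$ is essentially packaged inside the statement that $V_\bullet(D;\mathbf{0}^r)$ on $Y_r$ has the right volume; alternatively one slices off one coordinate at a time.)

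\smallskip

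\noindent\textbf{Part (b).} Here $D$ is only big, so $\mathbf{B}_+(D)$ may be nonempty and the slice $\Delta_{Y_\bullet}(D)|_{\mathbf{0}^{d-1}}$ need not meet the interior of $\Delta_{Y_\bullet}(D)$; this is the main obstacle, since Remark~\ref{r:Prop A.1} only gives an inclusion $\Delta(\Gamma|_{\mathbf{0}^{d-1}})\subset\Delta(\Gamma)|_{\mathbf{0}^{d-1}}$ a priori. The strategy is to approach $\mathbf{0}^{d-1}$ through interior points: for $\mathbf{a}=(a,\ldots,a)$ with $a>0$ small rational, each $Y_r\not\subset\mathbf{B}_+(D)$ ensures $Y_r\not\subset\mathbf{B}_+(D-\sum a_iA_i)$ for $a$ small, the slice condition holds, and by Corollary~\ref{c:7} the series $V_\bullet(D;\mathbf{a})$ on the curve $Y_{d-1}$ satisfies condition~(C); hence, by the one-dimensional case of the $\vol$--Okounkov-body correspondence on $Y_{d-1}$ together with Lemma~\ref{l:9} and Remark~\ref{r:Prop A.1},
\[
 \vol_{\RR^{1}}\bigl(\Delta_{Y_\bullet}(D)|_{\mathbf{a}}\bigr) = \vol\bigl(V_\bullet(D;\mathbf{a})\bigr).
\]
Next I would compute $\vol\bigl(V_\bullet(D;\mathbf{a})\bigr)$ by Theorem~C: $V_m(D;\mathbf{a})$ is a subseries of the complete series of $D-\sum a_iA_i$ on $Y_{d-1}$ with degree $Y_{d-1}\cdot D - a\sum Y_{d-1}\cdot A_i$, and its base points and vanishing orders are controlled below by Lemma~\ref{l:base locus V} (whose very-proper-intersection hypothesis is exactly what the very-general choice of the $A_i$'s provides) and above by Lemma~\ref{l:base locus C}; letting the $E_i$ range over the divisorial components of $\mathbf{B}(D)$ and $p$ over $Y_{d-1}\cap E_i$, this yields
\[
 \vol\bigl(V_\bullet(D;\mathbf{a})\bigr) = Y_{d-1}\cdot D - a\sum_i Y_{d-1}\cdot A_i - \sum_i\sum_{p\in Y_{d-1}\cap E_i}\ord_{E_i}\|D\| = \vol_{X|Y_{d-1}}(D) - a\sum_i Y_{d-1}\cdot A_i,
\]
using Corollary~\ref{c:restr vol} for the last equality. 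Finally I would let $a\to 0^+$: the left side is the length of a slice of the fixed convex body $\Delta_{Y_\bullet}(D)$, which is upper semicontinuous in $\mathbf{a}$ and (since for $a>0$ the slice meets the interior, hence equals a nonempty segment varying continuously) has limit $\ge \vol_{\RR^1}(\Delta_{Y_\bullet}(D)|_{\mathbf{0}^{d-1}})$; combined with the reverse inequality $\vol_{\RR^1}(\Delta_{Y_\bullet}(D)|_{\mathbf{0}^{d-1}}) = \vol(\Delta(\Gamma_{Y_\bullet}(D)|_{\mathbf{0}^{d-1}})) = \vol(V_\bullet(D;\mathbf{0}^{d-1})) \le \lim_{a\to 0}\vol(V_\bullet(D;\mathbf{a}))$ coming from semicontinuity of $\vol$ of the graded linear series in $\mathbf{a}$, we get equality, and the right side tends to $\vol_{X|Y_{d-1}}(D)$. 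Thus $\vol_{\RR^1}\bigl(\Delta_{Y_\bullet}(D)|_{\mathbf{0}^{d-1}}\bigr) = \vol_{X|Y_{d-1}}(D)$, as desired.

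\smallskip

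The step I expect to be the genuine obstacle is the passage to the limit $a\to 0$ in part (b): one must be careful that the slice length of the Okounkov body and the volume of $V_\bullet(D;\mathbf{a})$ are continuous (or at least appropriately semicontinuous from both sides) as $\mathbf{a}\to\mathbf{0}^{d-1}$, and that the boundary slice is not strictly smaller than the limit of interior slices. This is where the normality of $X$ and the precise very-general position of the $A_i$'s (very proper intersection with $\mathbf{B}(D)$, transversality with the $E_i$ at smooth points avoiding embedded components) are used, via Lemmas~\ref{l:base locus V} and \ref{l:base locus C}, to pin down $\vol(V_\bullet(D;\mathbf{a}))$ exactly rather than just up to inequalities.
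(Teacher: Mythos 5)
Your overall strategy is the same as the paper's: identify $\Delta_{Y_\bullet}(D)|_{\mathbf{a}}$ with the Okounkov body of $V_\bullet(D;\mathbf{a})$ on $Y_r$, compute $\vol(V_\bullet(D;\mathbf{a}))$ via Theorem~C, and pass to the limit $\mathbf{a}\to\mathbf{0}$. However, there are genuine gaps in the execution.

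First, in part (a) you assert that when $D$ is ample ``the interior condition is automatic'' for the slice at $\mathbf{a}=\mathbf{0}^r$. This is false: the Okounkov body lives in the closed positive orthant, so the slice $\Delta_{Y_\bullet}(D)|_{\mathbf{0}^r}$ (where the first $r$ coordinates vanish) is always contained in the boundary of $\Delta_{Y_\bullet}(D)$ and never meets its interior, regardless of $\mathbf{B}_+(D)$. Thus Remark~\ref{r:Prop A.1} never applies directly at $\mathbf{a}=\mathbf{0}^r$; you must work at $\mathbf{a}>0$ small and take a limit, which is what the paper does (and what your parenthetical alludes to without carrying out).

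Second, and more seriously, in part~(b) you claim the equality $\vol(V_\bullet(D;\mathbf{a}))=Y_{d-1}\cdot D - a\sum_i Y_{d-1}\cdot A_i - \sum_i\sum_p\ord_{E_i}\|D\|$. This is not what the cited lemmas give: Lemma~\ref{l:base locus V} bounds $\mathrm{Bs}(V_m(D;\mathbf{a}))$ and $\ord_p(V_m(D;\mathbf{a}))$ only from below, hence bounds $\vol(V_\bullet(D;\mathbf{a}))$ only from above; Lemma~\ref{l:base locus C} concerns the restricted complete series $C_\bullet(X,D)|_{Y_{d-1}}$, not $V_\bullet(D;\mathbf{a})$. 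So one obtains only $\le$, which is also all the paper records and all that is needed.

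Third, your final limiting paragraph does not actually establish the lower bound $\vol_{\RR^1}(\Delta_{Y_\bullet}(D)|_{\mathbf{0}^{d-1}})\ge\vol_{X|Y_{d-1}}(D)$. Both of the inequalities you invoke point the same way: the ``semicontinuity'' claim gives $\lim_{a\to 0}\vol(\Delta|_{\mathbf{a}})\ge\vol(\Delta|_{\mathbf{0}})$, and your ``reverse inequality'' chain gives $\vol(\Delta|_{\mathbf{0}})\le\lim_{a\to 0}\vol(V_\bullet(D;\mathbf{a}))$, which is the same thing. Moreover, the chain begins with the unjustified equality $\vol_{\RR^1}(\Delta_{Y_\bullet}(D)|_{\mathbf{0}^{d-1}})=\vol(\Delta(\Gamma_{Y_\bullet}(D)|_{\mathbf{0}^{d-1}}))$, whereas Remark~\ref{r:Prop A.1} only gives $\ge$ here. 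The lower bound is in fact elementary and should be isolated at the outset, exactly as the paper does: from $\Delta(\Gamma_{Y_\bullet}(D)|_{\mathbf{0}^{d-1}})\subset\Delta_{Y_\bullet}(D)|_{\mathbf{0}^{d-1}}$ together with Lemma~\ref{l:9}, Lemma~\ref{l:6} and the volume formula for the graded linear series $V_\bullet(D;\mathbf{0}^{d-1})$ on the curve $Y_{d-1}$, one gets $\vol_{\RR^1}(\Delta_{Y_\bullet}(D)|_{\mathbf{0}^{d-1}})\ge\vol_{X|Y_{d-1}}(D)>0$. This positivity is also what guarantees that nearby interior slices $\Delta_{Y_\bullet}(D)|_{\mathbf{a}}$ with $\mathbf{a}\in\QQ^{r}_{+}$ small actually meet the interior of the body, a logical dependency your write-up glosses over. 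With these corrections your argument would coincide with the paper's.
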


\begin{proof}
 Since $Y_{d-1}\nsubseteq \mathbf{B}_+(D)$, we have $\vol_{X|Y_{d-1}}(D)>0$. Hence
 \begin{align*}
  \vol_{\RR^{1}}\bigl(\Delta_{Y_\bullet}(D)|_{\mathbf{0}^{d-1}}\bigr)
   =\vol_{\RR^{1}}\bigl(\Delta\bigl(\Gamma_{Y_\bullet}(D)\bigr)|_{\mathbf{0}^{d-1}}\bigr)
   &\ge\vol_{\RR^{1}}\bigl(\Delta(\Gamma_{Y_\bullet}(D)|_{\mathbf{0}^{d-1}})\bigr)\\
   &=\vol_{X|Y_{d-1}}(D)>0.
 \end{align*}
 This implies we can find $\mathbf{a}\in\QQ_+^r$ with arbitrarily small norm $|\mathbf{a}|$
 such that
 $\Delta_{Y_\bullet}(D)|_{\mathbf{a}}$ meets the interior of $\Delta_{Y_\bullet}(D)$.
 Hence by \cite[Proposition~A.1]{LM}, \[
  \Delta_{Y_\bullet}(D)|_{\mathbf{a}}=\Delta\bigl(\Gamma_{Y_\bullet}(D)\bigr)|_{\mathbf{a}}
   =\Delta\bigl(\Gamma_{Y_\bullet}(D)|_{\mathbf{a}}\bigr)=\Delta\Bigl(\Gamma_{Y_\bullet|_{Y_r}}\bigl(V_\bullet(D;\mathbf{a})\bigr)\Bigr),
   \]
 where the last equality follows from Lemma~\ref{l:9}. By Corollary~\ref{c:7},
 $V_\bullet(D;\mathbf{a})$ satisfies condition~(C) as long as $|\mathbf{a}|$ is sufficiently
 small, hence we can calculate the Euclidean volume of
 $\Delta\Bigl(\Gamma_{Y_\bullet|_{Y_r}}\bigl(V_\bullet(D;\mathbf{a})\bigr)\Bigr)=\Delta_{Y_\bullet|_{Y_r}}\bigl(V_\bullet(D;\mathbf{a})\bigr)$
 by \cite[Theorem~2.13]{LM}:  \[
 \vol_{\RR^{d-r}}\Bigl(\Delta_{Y_\bullet|_{Y_r}}\bigl(V_\bullet(D;\mathbf{a})\bigr)\Bigr)=\frac{\vol\bigl(V_\bullet(D;\mathbf{a})\bigr)}{(d-r)!}.
 \]
 Combining the above equalities gives us
 \begin{equation} \label{e:*}
  \vol_{\RR^{d-r}}\bigl(\Delta_{Y_\bullet}(D)|_{\mathbf{a}}\bigr)=
    \frac{\vol\bigl(V_\bullet(D;\mathbf{a})\bigr)}{(d-r)!}.  \tag{$\ast$}
 \end{equation}

 If $D$ is ample, then $D-\sum_{i=1}^r a_iA_i$ is also ample as long as $|\mathbf{a}|$ is
 sufficiently small, and hence \[
   C_m(X,D-\sum_{i=1}^r a_iA_i)\Bigr|_{Y_r}= C_m(Y_r,D-\sum_{i=1}^r a_iA_i) \]
 for all sufficiently large $m$. So by Lemma~\ref{l:6}, \[
  \vol\bigl(V_\bullet(D;\mathbf{a})\bigr)= \vol\bigl(C_\bullet(Y_r,D-\sum_{i=1}^r a_iA_i)\bigr)
    = Y_r \cdot (D-\sum_{i=1}^r a_iA_i)^{d-r}. \]
 Substituting this back into \eqref{e:*}, we get   \[
  \vol_{\RR^{d-r}}\bigl(\Delta_{Y_\bullet}(D)|_{\mathbf{a}}\bigr)=\frac{Y_r \cdot (D-\sum_{i=1}^r
  a_iA_i)^{d-r}}{(d-r)!}, \]
 and letting $\mathbf{a}$ goes to $\mathbf{0}^r$ proves part~(a).

 Now suppose that $D$ is big but not necessarily ample, and $r=d-1$. Then as long as
 $|\mathbf{a}|$ is sufficiently small, $D-\sum_{i=1}^{d-1} a_iA_i$ is also big, and
 $V_\bullet(D;\mathbf{a})$ satisfies condition~(C) by Corollary~\ref{c:7}.
 Let $E_1,\ldots,E_n$ be all of the irreducible $(d-1)$-dimensional components of
 $\mathbf{B}(D)$. By Theorem~C, Lemma~\ref{l:base locus V}, and Lemma~\ref{l:base locus C},
 \begin{align*}
  \vol\bigl(V_\bullet(D;\mathbf{a})\bigr)
  &=\lim_{m\to\infty}\frac{1}{m}\Bigl(Y_{d-1}\cdot m(D-\sum_{i=1}^{d-1}
    a_iA_i)-\sum_{p\in\mathrm{Bs}(V_m(D;\mathbf{a}))}\ord_p\bigl(V_m(D;\mathbf{a})\bigr)\Bigr) \\
  &\le\lim_{m\to\infty}\frac{1}{m}\Bigl(Y_{d-1}\cdot m(D-\sum_{i=1}^{d-1}
    a_iA_i)-\sum_{p\in Y_{d-1}\cap \mathbf{B}(D)}\ord_p\bigl(V_m(D;\mathbf{a})\bigr)\Bigr) \\
  &=\lim_{m\to\infty}\frac{1}{m}\Bigl(Y_{d-1}\cdot m(D-\sum_{i=1}^{d-1}
    a_iA_i)-\sum_{i=1}^n\sum_{p\in Y_{d-1}\cap E_i}\ord_p\bigl(V_m(D;\mathbf{a})\bigr)\Bigr) \\
  &\le\lim_{m\to\infty}\frac{1}{m}\Bigl(Y_{d-1}\cdot m(D-\sum_{i=1}^{d-1}
    a_iA_i)-\sum_{i=1}^n\sum_{p\in Y_{d-1}\cap E_i}\ord_{E_i}(|mD|)\Bigr) \\
  &=Y_{d-1}\cdot(D-\sum_{i=1}^{d-1} a_iA_i)-\sum_{i=1}^n\sum_{p\in Y_{d-1}\cap
     E_i}\ord_{E_i}(\|D\|).
 \end{align*}
 Substituting this back into \eqref{e:*}, we get   \[
  \vol_{\RR^1}\bigl(\Delta_{Y_\bullet}(D)|_{\mathbf{a}}\bigr)\le
    Y_{d-1}\cdot(D-\sum_{i=1}^{d-1} a_iA_i)-\sum_{i=1}^n\sum_{p\in Y_{d-1}\cap
    E_i}\ord_{E_i}(\|D\|), \]
 and letting $\mathbf{a}$ goes to $\mathbf{0}^{d-1}$ gives \[
  \vol_{\RR^1}\bigl(\Delta_{Y_\bullet}(D)|_{\mathbf{0}^{d-1}}\bigr)\le
    Y_{d-1}\cdot D-\sum_{i=1}^n\sum_{p\in Y_{d-1}\cap E_i}\ord_{E_i}(\|D\|)
     = \vol_{X|Y_{d-1}}(D) \]
 by Corollary~\ref{c:restr vol}. Since we saw that
 $\vol_{\RR^1}\bigl(\Delta_{Y_\bullet}(D)|_{\mathbf{0}^{d-1}}\bigr)\ge \vol_{X|Y_{d-1}}(D)$
 in the beginning of the proof, part~(b) is thus established.
\end{proof}

Finally to prove Theorem~A, we need the following lemma:

\begin{lemma} \label{l:mov curve}
 Let $X$ be a smooth projective variety of dimension $d$, and let $Y\subset X$ be a
 transversal complete intersection of\/ $(d-2)$ very ample divisors. If $D_1,\ldots,D_\rho$
 are ample divisors on $X$ whose numerical classes form a basis of $\NQ{X}$, then the
 curve classes \[
               \{C_i:=Y\cdot D_i \mid i=1,\ldots,\rho \}  \]
 form a basis of $N_1(X)_{\QQ}$.
\end{lemma}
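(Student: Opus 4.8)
The plan is to show that the linear map
\[
 L\colon N^1(X)_{\QQ}\longrightarrow N_1(X)_{\QQ},\qquad \alpha\longmapsto Y\cdot\alpha,
\]
is an isomorphism. Since $N_1(X)_{\QQ}$ is the dual of $N^1(X)_{\QQ}$, both spaces have dimension $\rho$, and once $L$ is known to be an isomorphism it automatically carries the basis $D_1,\dots,D_\rho$ of $N^1(X)_{\QQ}$ to a basis $C_1,\dots,C_\rho$ of $N_1(X)_{\QQ}$. As $L$ is the map associated to the symmetric bilinear form $q(\alpha,\beta):=Y\cdot\alpha\cdot\beta$ on $N^1(X)_{\QQ}$, it is enough to prove that $q$ is nondegenerate; in particular the ampleness of the $D_i$ will play no role beyond providing a convenient ambient ample class.

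First I would record that, because the $(d-2)$ very ample divisors cutting out $Y$ meet transversally, $Y$ is a smooth projective surface, and it is connected — hence irreducible — by iterated application of the Lefschetz hyperplane theorem \cite{Laz}. Writing $\iota\colon Y\hookrightarrow X$ for the inclusion, the projection formula gives $q(\alpha,\beta)=\int_Y(\iota^*\alpha)\cdot(\iota^*\beta)$, so $q$ is the pullback under $\iota^*\colon N^1(X)_{\QQ}\to N^1(Y)_{\QQ}$ of the intersection pairing on the surface $Y$. Hence $q$ is nondegenerate provided that (i) $\iota^*$ is injective, and (ii) the restriction of the intersection form of $Y$ to the subspace $\iota^*\bigl(N^1(X)_{\QQ}\bigr)$ is nondegenerate. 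For (i), iterating the Lefschetz hyperplane theorem from $X$ down to the surface $Y$ shows that $H^2(X,\QQ)\to H^2(Y,\QQ)$ is injective, and $N^1(\,\cdot\,)_{\QQ}$ embeds into $H^2(\,\cdot\,,\QQ)$, so $\iota^*$ is injective. For (ii), fix an ample divisor $H$ on $X$ (say one of the $D_i$); its restriction $\iota^*H$ is ample on the surface $Y$, hence has positive self-intersection. By the Hodge index theorem \cite{Laz} the intersection form on $N^1(Y)_{\RR}$ has signature $(1,\rho(Y)-1)$, and any subspace $W$ of such a quadratic space that contains a vector $v$ with $v^2>0$ carries a nondegenerate form: the radical $W\cap W^{\perp}$ lies in $v^{\perp}$, which is negative definite because $v^2>0$, and an isotropic vector in a negative-definite space is zero. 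So $\iota^*\bigl(N^1(X)_{\QQ}\bigr)$ is nondegenerate for the intersection form, and combining (i) and (ii) gives that $q$ is nondegenerate, $L$ is an isomorphism, and $\{C_i\}$ is a basis of $N_1(X)_{\QQ}$.

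The step that I expect to require the most care is (ii): orthogonality of $\iota^*\alpha$ to the whole image $\iota^*\bigl(N^1(X)_{\QQ}\bigr)$ does not by itself force $\iota^*\alpha=0$ in the possibly larger space $N^1(Y)_{\QQ}$, and it is precisely the presence of an ample class in that image, together with the Hodge index theorem on $Y$, that makes the argument work. Equivalently, phrased elementwise: if $Y\cdot\alpha=0$ in $N_1(X)_{\QQ}$, then testing against $\beta=\alpha$ gives $(\iota^*\alpha)^2=0$ while testing against $\beta=H$ ample gives $(\iota^*\alpha)\cdot(\iota^*H)=0$, and the Hodge index theorem on $Y$ then forces $\iota^*\alpha=0$, whence $\alpha=0$ by (i). Everything else reduces to the Lefschetz hyperplane theorem and elementary linear algebra.
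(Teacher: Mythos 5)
Your proof is correct and follows essentially the same approach as the paper's: inject $N^1(X)_{\QQ}$ into $N^1(Y)_{\QQ}$ via Lefschetz, then conclude nondegeneracy of the intersection matrix $(C_i\cdot D_j)=(D_i|_Y\cdot D_j|_Y)$ from the Hodge index theorem on the surface $Y$. The paper's one-sentence proof leaves implicit exactly the step you flag as delicate — that nondegeneracy on the subspace $\iota^*(N^1(X)_{\QQ})$ requires the presence of a positive-self-intersection class together with the signature $(1,\rho(Y)-1)$ — so your write-up is a useful elaboration rather than a different route.
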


\begin{proof}
 By the Lefschetz hyperplane theorem, the numerical classes of $D_1|_Y,\ldots,D_\rho|_Y$
 are linearly independent in $\NQ{Y}$, hence by the Hodge index theorem for surfaces, the
 intersection matrix \[
      (D_i|_Y\cdot D_j|_Y)=(C_i\cdot D_j) \]
 is nondegenerate.
\end{proof}

\begin{proof}[Proof of Theorem~A]
 We first prove the theorem assuming that $X$ is smooth.
 We start with an observation that for any admissible flag $Y_\bullet$ on $X$ and any big divisor $D$
 on $X$, the asymptotic order of vanishing $\ord_{Y_1}(\|D\|)$ equals the minimum of
 the projection of $\Delta_{Y_\bullet}(D)$ to the first coordinate axis. From this we
 see that for every prime divisor $E$ on $X$, \[
   \ord_E(\|D_1\|)=\ord_E(\|D_2\|),  \]
 since we can always extend $E$ into an admissible flag.
 By Lemma~\ref{l:mov curve}, we can choose $\rho$ admissible flags
 $Y^1_\bullet,\ldots,Y^\rho_\bullet$, such that each one is sufficiently general to make
 Corollary~\ref{c:restr vol} and Theorem~\ref{t:main}~(b) hold, and the numerical classes of
 the curves $Y^1_{d-1},\ldots,Y^\rho_{d-1}$ form a basis of $N_1(X)_{\QQ}$. It then
 follows that $Y^i_{d-1}\cdot D_1=Y^i_{d-1}\cdot D_2$ for all $i\in\{1,\ldots,\rho\}$,
 hence $D_1$ and $D_2$ are numerically equivalent.

 When $X$ is just normal but not smooth, we let $\pi\colon X'\to X$ be a resolution of
 singularities which is an isomorphism over $X-X_{\text{sing}}$. Then \[
   H^0(X,D)=H^0(X',\pi^*D) \]
 for any big divisor $D$ on $X$, and therefore if $Y'_\bullet$ is an admissible flag on $X'$
 such that $\pi(Y'_d)\notin X_{\text{sing}}$, then \[
   \Delta_{Y'_\bullet}(\pi^*D)=\Delta_{\pi(Y'_\bullet)}(D). \]
 It thus follows from the previous paragraph on the smooth case that $\pi^*D_1$ and $\pi^*D_2$ are
 numerically equivalent, and hence so are $D_1$ and $D_2$.
\end{proof}


\end{document}